\newtheorem{theorem}{Theorem}[section]
\newtheorem{lemma}[theorem]{Lemma}
\newtheorem{proposition}[theorem]{Proposition}
\newtheorem{corollary}[theorem]{Corollary}
\theoremstyle{definition}
\newtheorem{definition}[theorem]{Definition}
\theoremstyle{remark}
\newcommand{\nor}[1]{\left\lVert #1 \right\rVert}
\newcommand{\nors}[1]{\lVert #1 \rVert}
\newcommand{\abs}[1]{\left\lvert #1 \right\rvert}
\newcommand{\pth}[1]{\left( #1 \right)}
\newcommand{\set}[1]{\left\lbrace #1 \right\rbrace}
\newcommand{\abset}[1]{\abs{\set{#1}}}
\newcommand{\pp}[1]{^{(#1)}}
\newcommand{\cnt}[3]{ _{#1 = #2} ^{\IfStrEq{#3}{i}{\infty}{#3}}}
\newcommand{\inv}{^{-1}}
\newcommand{\T}{\mathbb T}
\newcommand{\unu}{u ^\nu}
\newcommand{\unup}{u ^\nu _{\operatorname{Pr}}}
\newcommand{\Unu}{U ^\nu}
\newcommand{\Unup}{U ^\nu _{\operatorname{Pr}}}
\newcommand{\Pnu}{P ^\nu}
\newcommand{\ub}{\bar u}
\newcommand{\Ub}{\bar U}
\newcommand{\Pb}{\bar P}
\newcommand{\Bp}[1]{B ^+ _#1}
\newcommand{\Bb}[1]{\bar B _#1}
\newcommand{\Qp}[1]{Q ^+ _#1}
\newcommand{\Qb}[1]{\bar Q _#1}
\newcommand{\Qbi}{\bar Q ^i}
\newcommand{\Bbi}{\bar B ^i}
\newcommand{\mQ}[1]{\mathcal Q _#1}
\newcommand{\mQo}{\mathcal Q ^\circ}
\renewcommand{\Re}{\mathsf{Re}}
\newcommand{\per}{_{\text{per}}}
\newcommand{\pOmega}{\partial \Omega}
\newcommand{\tomega}{\tilde \omega}
\newcommand{\tomeganu}{\tomega ^\nu}
\newcommand{\indtomega}{\indWithSet{\tomega > \maxs{\frac1t, \frac1{W ^2}, \frac1{H ^2}}}}
\newcommand{\indtomeganu}{\indWithSet{\nu \tomega ^\nu > \maxs{\frac\nu t, \frac{\nu ^2}{W ^2}, \frac{\nu ^2}{H ^2}}}}
\newcommand{\qb}{\bar q}
\newcommand{\eb}{\bar e}
\newcommand{\fb}{\bar f}
\newcommand{\vb}{\bar v}
\newcommand{\tOmega}{\tilde \Omega}
\newcommand{\R}{\mathbb{R}}
\newcommand{\RR}[1]{\R ^{#1}}
\newcommand{\Rd}{{\R ^d}}
\newcommand{\pt}{\partial _t}
\newcommand{\grad}{\nabla}
\newcommand{\La}{\Delta}
\renewcommand{\div}{\operatorname{div}}
\newcommand{\curl}{\operatorname{curl}}
\newcommand{\tensor}{\otimes}
\newcommand{\inn}{\text{ in }}
\newcommand{\onn}{\text{ on }}
\newcommand{\att}{\text{ at }}
\renewcommand{\ae}{\text{ a.e. }}
\newcommand{\forr}{\text{ for }}
\renewcommand*{\d}{\mathop{\kern0pt\mathrm{d}}\!{}}
\newcommand{\dt}{\d t}
\newcommand{\dx}{\d x}
\newcommand{\pfr}[2]{\frac{\partial #1}{\partial #2}}
\newcommand{\dfr}[2]{\frac{\d #1}{\d #2}}
\newcommand{\ddt}{\dfr{}t}
\newcommand{\pthf}[2]{\pth{\frac{#1}{#2}}}
\newcommand{\at}[1]{\bigr\rvert _{#1}}
\newcommand{\half}{\frac12}
\newcommand{\nmL}[2]{\nor{#2} _{L ^{#1}}}
\newcommand{\nmLW}[2]{\nor{#2} _{L ^{#1} (\Omega)}}
\newcommand{\nmLWT}[2]{\nor{#2} _{L ^{#1} ((0, T) \times \Omega)}}
\newcommand{\nmLWt}[2]{\nor{#2} _{L ^{#1} ((0, t) \times \Omega)}}
\newcommand{\nmLpW}[2]{\nor{#2} _{L ^{#1} (\partial \Omega)}}
\newcommand{\nmLpWT}[2]{\nor{#2} _{L ^{#1} ((0, T) \times \partial \Omega)}}
\newcommand{\nmLtx}[4]{\nor{#4} _{L ^{#1} _t L ^{#2} _x (#3)}}
\newcommand{\nmLpqIS}[5]{\nor{#5} _{L ^{#1} (#2; L ^{#3} (#4))}}
\newcommand{\Lt}[1]{L _t ^{#1}}
\newcommand{\Lx}[1]{L _x ^{#1}}
\newcommand{\Lz}[1]{L _z ^{#1}}
\newcommand{\Lpq}[4]{L ^{#1} (#3; L ^{#2} (#4))}
\newcommand{\nmpq}[3]{\nor{#3} _{L ^{#1}(0, T; L ^{#2} (\Omega))}}
\newcommand{\indWithSet}[1]{\mathbf1_{\set{#1}}}
\newcommand{\ind}[1]{\mathbf1_{#1}}
\newcommand{\intset}[1]{\int _{\set{#1}}}
\newcommand{\mins}[2][]{\min _{#1} \set{#2}}
\newcommand{\maxs}[2][]{\max _{#1} \set{#2}}
\newcommand{\e}{\varepsilon}
\newcommand{\mm}{\mathcal M}
\newcommand{\loc}{\mathrm{loc}}
\newcommand{\Id}{\mathrm{Id}}
\newcommand{\osc}{\mathrm{osc}}
\newcommand{\hfsq}[1]{\frac{\abs{#1} ^2}{2}}
\newcommand{\pz}{\partial _z}
\newcommand{\rstar}{r _\star}
\newcommand{\ulim}{u ^{\infty}}
\renewcommand{\overline}[1]{\operatorname{closure} \big\lbrace #1 \big\rbrace}
\begin{document}

\title[Boundary Vorticity of Navier-Stokes and Inviscid Limit]{Boundary Vorticity Estimates for Navier-Stokes and Application to the Inviscid Limit}

\author{Alexis F. Vasseur}
\address{Department of Mathematics, 
The University of Texas at Austin, 
2515 Speedway Stop C1200,
Austin, TX 78712, USA}
\email{vasseur@math.utexas.edu}

\author{Jincheng Yang}
\address{Department of Mathematics, 
The University of Texas at Austin,
2515 Speedway Stop C1200, 
Austin, TX 78712, USA}
\email{jcyang@math.utexas.edu}

\date{\today}
\keywords{Navier-Stokes Equation, Inviscid Limit, Boundary Regularity, Blow-up Technique, Layer Separation}
\subjclass[2020]{76D05, 35B65}

\thanks{\textit{Acknowledgment}. The first author was partially supported by the NSF grant: DMS 1907981. The second author was partially supported by the NSF grant: DMS-RTG 1840314.}

\begin{abstract}
    Consider the steady solution to the incompressible Euler equation $\bar u = A e _1$ in the periodic tunnel $\Omega = \mathbb T ^{d - 1} \times (0, 1)$ in dimension $d = 2, 3$. Consider now the family of solutions $u ^\nu$ to the associated Navier-Stokes equation with the no-slip condition on the flat boundaries, for small viscosities $\nu = A / \mathsf{Re}$, and initial values in $L ^2$. We are interested in the weak inviscid limits up to subsequences $u ^\nu \rightharpoonup u ^\infty$ when both the viscosity $\nu$ converges to 0, and the initial value $u ^\nu _0$ converges to $A e _1$ in $L ^2$. Under a conditional assumption on the energy dissipation close to the boundary, Kato showed in 1984 that $u ^\nu$ converges to $A e _1$ strongly in $L ^2$ uniformly in time under this double limit.
    It is still unknown whether this inviscid limit is unconditionally true. The convex integration method produces solutions $u _E$ to the Euler equation with the same initial values $A e_1$ which verify at time $0 < T < T _0$:
    $
        \lVert u _E (T) - A e _1 \rVert _{L ^2 (\Omega)} ^2 \approx A ^3 T.
    $
    This predicts the possibility of a layer separation with an energy of order $A^3 T$.
    We show in this paper that the energy of layer separation associated with any asymptotic $u ^\infty$ obtained via double limits cannot be more than
    $ 
        \lVert u ^\infty (T) - A e _1 \rVert _{L ^2 (\Omega)} ^2 \lesssim A ^3 T.
    $
    This result holds unconditionally for any weak limit of Leray-Hopf solutions of the Navier-Stokes equation.
    Especially, it shows that, even if the limit is not unique, the shear flow pattern is observable up to time $1 / A$. This provides a notion of stability despite the possible non-uniqueness of the limit predicted by the convex integration theory.
    The result relies on a new boundary vorticity estimate for the Navier-Stokes equation. This new estimate, inspired by previous work on higher regularity estimates for Navier-Stokes, provides a nonlinear control scalable through the inviscid limit.
\end{abstract}

\maketitle

\tableofcontents

\setlength{\parskip}{0.3em}

\section{Introduction}
For dimension $d=2, 3$, we consider the periodic channel with physical boundary at $x _d = 0$ and $x _d = 1$: $\Omega = \T ^{d - 1} \times (0, 1)$, where $\T = [0, 1] \per$ denotes the unit periodic domain.
For any kinematic viscosity $\nu >0$, we denote $\unu : (0, T) \times \Omega \to \Rd$ the velocity field of an incompressible fluid confined in $\Omega$, 
subject to no-slip boundary conditions, and  $\Pnu: (0, T) \times \Omega \to \R$ the associated pressure field. The dynamic of the flow is described by the following Navier-Stokes Equation:
\begin{align}
    \label{eqn:nse-nu}
    \tag{$\text{NSE} _\nu$}
    \begin{cases}
        \pt \unu + \unu \cdot \grad \unu + \grad \Pnu = \nu \La \unu & \inn (0, T) \times \Omega \\  
        \div \unu = 0 & \inn (0, T) \times \Omega \\
        \unu = 0 & \text{ for } 
        x_d=0,\text{ and }x_d=1.
    \end{cases}
\end{align}

For any $A>0$, we investigate the inviscid asymptotic behavior of $\unu$
when  $\nu$ converges to 0,  under the condition that the initial values converge to a shear flow of strength $A$:
\begin{equation}\label{ini-lim}
\lim _{\nu \to 0} \nmLW2{\unu (0) - A e _1} = 0.
\end{equation}
Note that the steady shear flow $\ub(t,x)=Ae_1$ is solution to the Euler equation with impermeability 
boundary condition:
\begin{align}
    \label{eqn:euler}
    \tag{EE}
    \begin{cases}
        \pt \ub + \ub \cdot \grad \ub + \grad \Pb = 0 & \inn (0, T) \times \Omega \\
        \div \ub = 0 & \inn (0, T) \times \Omega \\
        \ub \cdot n = 0 & \text{ for } x_d=0,\text{ and }x_d=1,
    \end{cases}
\end{align}
where $n$ is the outer normal as shown in Figure \ref{fig:channel}. 
However, it is an outstanding open question (even in dimension 2) whether, in the double limit \eqref{ini-lim} and  $\nu \to 0$, the solution $\unu$ of \eqref{eqn:nse-nu} converges to this shear flow  $A e_1$. The difficulty of this problem stems from the discrepancy between the no-slip boundary condition for the Navier-Stokes equation and the impermeable boundary condition of the Euler equation. Kato \cite{Kato1984} showed in 1984 a conditional result ensuring this convergence under the a priori assumption that the energy dissipation rate in a very thin boundary layer $\Gamma_\nu$ of width proportional to $\nu$ vanishes:
\[ 
    \lim _{\nu \to 0} \int _0 ^T \int _{\Gamma _\nu} \nu \abs{\grad \unu} ^2 \dx \dt = 0. 
\]
This condition has been sharpened in a variety of ways (see, for instance 
\cite{Temam1997, Wang2001, Kelliher2007, Kelliher2008} and  Kelliher \cite{Kelliher2017}, for a general review), and similar other conditional results have been derived (see for instance \cite{Bardos2012, Constantin2015, Constantin2017, Constantin2018}). Non-conditional results of strong inviscid limits have been obtained only for real analytic initial data \cite{Caflisch1998}, vanishing vorticity near the boundary \cite{Maekawa2014, Fei2018}, or symmetries \cite{Filho2008, Mazzucato2008}. 
Since \cite{Prandtl1904}, it is expected that in favorable cases, the Prandtl boundary layer describes the behavior of the solution $\unu$ up to a distance proportional to $\sqrt{\nu}$. However, even in the simple shear flow case, it is possible to engineer families of initial values $\unu(0)$ converging to the shear flow, but associated to Prandtl boundary layers which are either strongly unstable \cite{Grenier2000}, blow up in finite time \cite{E2000}, or even ill-posed in the Sobolev framework \cite{Varet2010, Varet2012}.

It is actually believed that the inviscid asymptotic limit may fail due to turbulence (See  Bardos and Titi \cite{Bardos2013}). This scenario is consistent with the non-uniqueness pathology of the shear flow solution for the Euler system  \eqref{eqn:euler}. 
Indeed, an adaptation to the boundary value problem  \eqref{eqn:euler} of the construction based on convex integration of Szekelyhidi in \cite{Szekelyhidi2011}   provides infinitely many solutions to \eqref{eqn:euler} with initial value $Ae_1$ (see also Bardos, Titi, Wiedemann \cite{Bardos2012} for a different boundary geometry). 
More precisely, the following estimate can be proved on this construction (see appendix \ref{appendix}).
\begin{proposition}\label{prop-convex}
For any $0 < C < 2$, there exists a solution $v$ to \eqref{eqn:euler} with initial value $Ae_1$ such that for any time $T < 1 / (2 A)$:
\[ 
    \nmLW2{v (T) - A e _1} ^2 = C A ^3 T.
\] 
\end{proposition}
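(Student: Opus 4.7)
The plan is to apply the Tartar–De Lellis–Székelyhidi convex-integration strategy for the Euler equations, adapted to the channel geometry $\Omega = \T^{d-1} \times (0,1)$ and to the shear-flow subsolution, as developed in \cite{Szekelyhidi2011}. First I would reformulate \eqref{eqn:euler} as the linear system
\[
    \pt v + \div M + \grad q = 0, \quad \div v = 0,
\]
together with the nonlinear pointwise constraint $M = v \otimes v - (|v|^2/d) \Id$ with $M$ symmetric and trace-free. A strict subsolution with prescribed energy density $e \colon (0,T) \times \Omega \to \R_{>0}$ is a triple $(\bar v, \bar M, \bar q)$ solving the linear part subject to the strict matrix inequality $\bar v \otimes \bar v - \bar M < (2/d) e \Id$; actual weak solutions of \eqref{eqn:euler} correspond to the saturated case $|v|^2 = 2 e$ a.e.

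For the explicit subsolution I would take
\[
    \bar v \equiv A e_1, \qquad \bar M \equiv A^2 e_1 \otimes e_1 - (A^2/d) \Id, \qquad \bar q \equiv 0,
\]
together with the linearly growing energy profile $e(t) = A^2/2 + (C A^3 / 2) t$. The linear equation, the incompressibility, the impermeability $\bar v \cdot n = 0$, and the initial value $\bar v(0) = A e_1$ are immediate. The pointwise matrix condition reduces to $(A^2/d) \Id < (2/d) e(t) \Id$, that is, $e(t) > A^2/2$, which holds strictly for every $t > 0$.

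Next I would invoke Székelyhidi's perturbation iteration to upgrade the strict subsolution to a weak solution $v \in L^\infty((0,T); L^2(\Omega))$ of \eqref{eqn:euler} satisfying $v(0) = A e_1$, $v \cdot n = 0$ on $\partial\Omega$, and $|v(t,x)|^2 = 2 e(t)$ for a.e.\ $(t,x)$. The building blocks are oscillatory plane waves compatible with the Euler wave cone, taken periodic in $x_1, \ldots, x_{d-1}$ and smoothly localized in $x_d \in (0,1)$ so that the no-penetration condition is preserved along the iteration. The $x_1$-momentum is conserved by the weak flow (the pressure contribution vanishes by horizontal periodicity and the flux contribution by $v \cdot n = 0$), so $\int_\Omega v_1(T) \dx = A$. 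Expanding,
\[
    \nmLW{2}{v(T) - A e_1}^2 = \int_\Omega |v(T)|^2 \dx - 2 A \int_\Omega v_1(T) \dx + A^2 = 2 \int_\Omega e(T) \dx - A^2 = C A^3 T,
\]
which is the claim.

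The main technical obstacle is designing wave-cone building blocks that are simultaneously compatible with horizontal periodicity, with the rigid boundary at $x_d \in \{0,1\}$, and with the strict subsolution inequality everywhere on $(0,T) \times \Omega$. The iteration closes only as long as the pointwise kinetic-energy density $|v|^2 = A^2 + C A^3 t$ remains below $2 A^2$, from which the admissible regime $C < 2$ and $T < 1/(2A)$ emerges: $A^2 + C A^3 T < 2 A^2$ exactly when $C A T < 1$. A secondary check is that the conservation of $\int_\Omega v_1$ and the initial condition survive passage to the weak limit, which follows from the weak formulation together with the translation symmetry of the channel in the horizontal variables.
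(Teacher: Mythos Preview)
Your strategy (constant subsolution $\bar v=Ae_1$ with a linearly increasing target energy $e(t)=\tfrac{A^2}{2}+\tfrac{C A^3}{2}t$) is genuinely different from the paper's. The paper does \emph{not} use $Ae_1$ as the subsolution: it builds a time-dependent shear subsolution $\bar v=(\alpha(t,x_d),0)$ that equals $e_1$ at $t=0$ and develops a mixing zone of width $\lambda t$ at each wall, so that the energy is \emph{decreasing}. The restriction $0<C<2$ in the paper is not a structural obstruction of convex integration; it is simply the range of rates $2(\lambda-\tfrac{2}{3}\varepsilon\lambda(1-\lambda))$ obtainable with parameters $\lambda,\varepsilon\in(0,1)$, together with the lifespan $T<1/(2\lambda A)$ of that subsolution. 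Your argument that the iteration ``closes only as long as $|v|^2<2A^2$'' has no basis: the relative interior condition $\bar v\otimes\bar v-\bar M<\tfrac{2e}{d}\mathrm{Id}$ reduces to $e(t)>\tfrac{A^2}{2}$, with no upper bound on $e$. So your derivation of $C<2$ and $T<1/(2A)$ is spurious; if your scheme worked as written it would in fact produce every $C>0$.

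The more serious gap is the boundary. You write that the building blocks are ``smoothly localized in $x_d\in(0,1)$ so that the no-penetration condition is preserved,'' yet you also claim $|v(t,x)|^2=2e(t)$ for a.e.\ $x\in\Omega$; these are in tension, since compact localization in $x_d$ leaves $v=\bar v=Ae_1$, hence $|v|^2=A^2<2e(t)$, on a neighborhood of $\partial\Omega$. The paper resolves this differently: it extends the subsolution by zero to a larger strip $\mathbb T\times(-1,2)$, convex integrates only on the open set $\mathcal U\subset (0,T)\times\mathbb T\times(0,1)$ where the strict inequality holds, and then recovers the impermeability trace $v_d|_{x_d=0,1}=0$ from the global divergence-free condition (which forces $\tilde v_d\in C_{x_d}$ and $\tilde v_d=0$ outside $(0,1)$). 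Your constant subsolution admits the same extension trick, but you have not carried it out; until you do, the simultaneous claims ``$|v|^2=2e$ a.e.\ in $\Omega$'' and ``$v\cdot n=0$ on $\partial\Omega$'' are unjustified. A secondary point: your solutions have increasing energy, whereas the paper's construction is energy-dissipative, which is what makes those wild solutions relevant as candidate inviscid limits.
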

The convex integration is a powerful tool introduced by De Lellis and Szekelyhidi \cite{DeLellis2009} to construct spurious solutions to the Euler equation. It proved itself to be a powerful tool to model turbulence. For instance, the technique was successfully applied by Isett \cite{Isett2018} to prove the Onsager theorem  (see also \cite{Buckmaster2019} for the construction of admissible solutions, and   \cite{Constantin1994} for the proof of the other direction). It shows that turbulent flows can have regularity $C^\alpha$ for any  $\alpha$ up to $1/3$, a property conjectured by Onsager \cite{Onsager1949}.  
Proposition \ref{prop-convex} predicts the possible deviation from the initial shear flow $Ae_1$ due to turbulence, a phenomenon called layer separation. Moreover, it provides an explicit value for the $L^2$ norm of this layer separation.  

This article aims to provide an upper bound on the $L^2$ norm of possible layer separations through the double limit inviscid asymptotic. 
In our channel framework, the Reynolds number is given by $\Re = A/\nu$.
Our main theorem is the following.

\begin{theorem}
    \label{thm:constant-shear}
    Let $\Omega$ be a unit periodic channel in $\Rd$ of dimension $d = 2, 3$. There exists $C > 0$ depending on $d$ only, such that the following is true.
    Let $\ub = A e _1$ be a constant shear flow for some $A > 0$, and let $\unu$ be a Leray-Hopf solution to \eqref{eqn:nse-nu} with kinematic viscosity $\nu > 0$.
    For any $T > 0$, we have
    \begin{align*}
        & \nmLW2{\unu (T) - \ub} ^2 + \frac\nu2 \nmLWT2{\grad \unu} ^2 \\ 
        & \qquad \le 4 \nmLW2{\unu (0) - \ub} ^2 + C A ^3 T + C A ^2 \Re \inv \log (2 + \Re).
    \end{align*}
\end{theorem}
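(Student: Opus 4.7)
My plan is to combine a relative energy estimate for $w^\nu := u^\nu - \bar u$ with a new boundary vorticity bound. Since $\bar u = A e_1$ is constant, divergence-free, and satisfies $\bar u \cdot n = 0$ on $\partial \Omega$, testing \eqref{eqn:nse-nu} against $\bar u$ makes the convective term vanish (because $\nabla \bar u = 0$), kills the pressure term by $x_1$-periodicity, and collapses the viscous term to a boundary integral, giving the identity
\[
    \int_\Omega u^\nu(T) \cdot \bar u \, \dx - \int_\Omega u^\nu(0) \cdot \bar u \, \dx
    = A \nu \int_0^T \int_{\partial \Omega} \partial_n u^\nu_1 \, dS \, \dt.
\]
Subtracting twice this from the Leray--Hopf energy inequality for $u^\nu$, and using $\nabla u^\nu = \nabla w^\nu$, produces
\[
    \tfrac12 \nmLW{2}{w^\nu(T)}^2 + \nu \int_0^T \nmLW{2}{\nabla w^\nu}^2 \dt
    \le \tfrac12 \nmLW{2}{w^\nu(0)}^2 - A \nu \int_0^T \! \int_{\partial \Omega} \partial_n u^\nu_1 \, dS \, \dt.
\]
Because $u^\nu$ vanishes on $\partial \Omega$, all tangential derivatives of $u^\nu$ vanish there as well, so $\partial_n u^\nu_1$ equals, up to a sign on each component of $\partial \Omega$, a tangential component of the boundary vorticity $\omega^\nu$; the theorem therefore reduces to controlling $A \nu \int_0^T \nmLpW{1}{\omega^\nu(t)} \dt$.

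The second step is to invoke the \emph{boundary vorticity estimate}. Concretely, I aim to prove a bound of the form
\[
    A \nu \int_0^T \nmLpW{1}{\omega^\nu(t)} \dt
    \le C_1 \nmLW{2}{w^\nu(0)}^2 + C_2 A^3 T + C_3 A^2 \Re^{-1} \log (2 + \Re) + \theta \nu \int_0^T \nmLW{2}{\nabla w^\nu}^2 \dt
\]
with $\theta \in (0, 1)$ and constants $C_j$ depending only on $d$. Substituting this into the energy estimate absorbs a fraction of the dissipation on the left, and upon multiplying through by a fixed constant one recovers the stated inequality; the factor $4$ in front of the initial data is an artifact of this Young-type absorption.

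The main obstacle is the boundary vorticity estimate itself. For Leray--Hopf solutions one does not even have an $L^1$ trace of $\nabla u^\nu$ on $\partial \Omega$, so crude trace inequalities are useless; the Navier--Stokes structure must be used in an essential way. Inspired by the paper's reference to higher regularity techniques, I would pursue a blow-up / covering argument: at each space-time boundary point where the local vorticity exceeds the natural boundary-layer scale $\sim A / \nu$, perform a parabolic zoom of width $\delta \sim \nu / A$ turning the local problem into a unit-Reynolds-number half-space problem, to which a local maximal-regularity / compactness estimate applies. Summing such contributions via a Vitali-type cover of $(0, T) \times \partial \Omega$ accounts for the $A^3 T$ term (coming from the measure of the covering), while a dyadic summation of energies over scales between $\nu / A$ and $1$ produces the $\log (2 + \Re)$ factor. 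Making this blow-up scheme rigorous in the Leray--Hopf class, so that all constants remain scalable as $\nu \to 0$, is the principal technical challenge and is precisely where the bulk of the paper's work resides.
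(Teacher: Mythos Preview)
Your skeleton---relative energy identity reducing to a boundary vorticity integral, then a blow-up/covering estimate on that integral---matches the paper exactly. Two specific technical ingredients are missing from your sketch, and the origin you give for the $\log(2+\Re)$ term is not the one that actually appears.

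\textbf{Time averaging of the boundary vorticity.} You write the target quantity as $A\nu\int_0^T\|\omega^\nu(t)\|_{L^1(\partial\Omega)}\,\dt$, but for Leray--Hopf solutions no pointwise-in-time trace of $\nabla u^\nu$ is available, and more importantly the local boundary Stokes estimate you need after blow-up requires control on the local pressure, which can oscillate arbitrarily in time (Seregin's counterexample). The paper's fix is to work not with $\omega^\nu$ but with a locally time-averaged version $\tilde\omega^\nu$ built from a parabolic dyadic partition of $(0,T)\times\partial\Omega$; the time convolution tames $\partial_t u$, allowing the boundary Stokes estimate to close. Because $\bar u$ is constant on each boundary component, the replacement $\omega^\nu\to\tilde\omega^\nu$ is harmless in the integral you need. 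The output is a weak-$L^{3/2}$ bound
\[
\bigl\|\nu\tilde\omega^\nu\,\ind{\{\nu\tilde\omega^\nu>\max(\nu/t,\,\nu^2/W^2,\,\nu^2/H^2)\}}\bigr\|_{L^{3/2,\infty}((0,T)\times\partial\Omega)}^{3/2}
\le C\,\nu\|\nabla u^\nu\|_{L^2}^2,
\]
with no initial-data term and no log. Pairing this against $A\in L^{3,1}$ and using Young's inequality gives exactly your $\theta\,\nu\|\nabla u^\nu\|_{L^2}^2 + CA^3T$ split for the ``large'' part of $\tilde\omega^\nu$.

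\textbf{The short-time singularity and the Prandtl bridge.} The covering estimate above controls only the region $\{\nu\tilde\omega^\nu>\nu/t\}$; on the complement one has $\nu\tilde\omega^\nu\le\nu/t$, whose integral over $(0,T)\times\partial\Omega$ diverges logarithmically at $t=0$. The paper handles this by splitting time at $T_\nu\sim\nu^3$: for $t\in(0,T_\nu)$ one compares $u^\nu$ with the explicit Prandtl layer $u^\nu_{\mathrm{Pr}}$ (a 1D heat flow with data $\bar u$), whose Lipschitz norm decays like $(\nu t)^{-3/4}$, so $\int_0^{T_\nu}\|\nabla u^\nu_{\mathrm{Pr}}\|_{L^\infty}\,\dt\le\tfrac12\log2$ and Gr\"onwall costs only a fixed factor (this is where the $4$ in front of the initial data genuinely comes from, not from a Young absorption). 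For $t\in(T_\nu,T)$ the remainder integral is $A\nu\int_{T_\nu}^T t^{-1}\,\dt\,|\partial\Omega|\sim A\nu\log(T/T_\nu)\sim A\nu\log\Re = A^2\Re^{-1}\log\Re$. So the logarithm is produced by the \emph{time} cutoff $T_\nu\sim\nu^3$, not by a dyadic summation over spatial scales between $\nu/A$ and $1$ as you suggest; the spatial covering argument itself yields the clean scale-invariant $L^{3/2,\infty}$ bound with no log.
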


This theorem is the special case of a more general result given in Theorem \ref{thm:main} at the end of this section.
By Leray-Hopf solution, we mean any weak solutions to \eqref{eqn:nse-nu} which in addition verifies the energy inequality:
$$
 \half \ddt \nmLW2{\unu} ^2\leq -\nu \nmLW2{\grad \unu} ^2.
$$
We have the following corollary on any weak inviscid limit, which corresponds to the layer separation predicted by Proposition \ref{prop-convex}.

\begin{corollary} 
\label{cor:weak-limit}
    There exists a universal constant $C > 0$ such that the following is true. Consider any family $\unu$ of a Leray-Hopf solutions to \eqref{eqn:nse-nu} such that $\unu _0$ converges strongly in $L ^2 (\Omega)$ to $A e _1$. Then, for any weak limit $\ulim$ of weakly convergent subsequences of $\unu$, we have for almost every $T > 0$ that 
    \begin{align*}
        \nmLW2{\ulim (T) - A e _1} ^2 \le C A ^3 T.
    \end{align*}
\end{corollary}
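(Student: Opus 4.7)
The strategy is to pass to the inviscid limit $\nu \to 0$ in Theorem \ref{thm:constant-shear} applied to each $\unu$. Since $\Re = A/\nu$, the last term on the right-hand side is $CA^2 \Re^{-1} \log(2+\Re) = CA \nu \log(2+A/\nu)$, which vanishes as $\nu \to 0$. Combined with the hypothesis $\nmLW{2}{\unu(0) - A e_1} \to 0$, only the $CA^3 T$ contribution survives, so the task reduces to transferring this estimate from $\unu$ to the weak limit $\ulim$.

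To sidestep the technical question of whether $\unu(T) \rightharpoonup \ulim(T)$ in $L^2(\Omega)$ for \emph{every} fixed $T$, I would integrate the pointwise-in-$T$ inequality of Theorem \ref{thm:constant-shear} against an arbitrary nonnegative $\phi \in C_c^\infty(0, T_0)$. This produces
\begin{align*}
\int_0^{T_0} \phi(T) \nmLW{2}{\unu(T) - A e_1}^2 \, dT
&\le \left( 4 \nmLW{2}{\unu(0) - A e_1}^2 + CA \nu \log(2+A/\nu) \right) \int_0^{T_0} \phi \\
&\quad + CA^3 \int_0^{T_0} \phi(T) \, T \, dT.
\end{align*}
The Leray-Hopf energy inequality bounds $\unu$ uniformly in $L^\infty(0, T_0; L^2(\Omega))$, so, up to a subsequence, $\phi^{1/2}(\unu - A e_1)$ converges weakly to $\phi^{1/2}(\ulim - A e_1)$ in $L^2((0, T_0) \times \Omega)$. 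Weak lower semi-continuity of the $L^2$ norm, combined with the vanishing of the first group of terms on the right as $\nu \to 0$, then yields
\[
\int_0^{T_0} \phi(T) \nmLW{2}{\ulim(T) - A e_1}^2 \, dT \le CA^3 \int_0^{T_0} \phi(T) \, T \, dT.
\]

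Since this inequality holds for every nonnegative $\phi \in C_c^\infty(0, T_0)$ and every $T_0 > 0$, the pointwise estimate $\nmLW{2}{\ulim(T) - A e_1}^2 \le CA^3 T$ follows for almost every $T > 0$, which is the claim. The only real subtlety is pinning down the topology of the weak convergence asserted in the corollary; the natural choice for Leray-Hopf solutions is weak-$*$ convergence in $L^\infty_t L^2_x$, supplied automatically by Banach-Alaoglu, and this is strong enough for the weighted weak convergence used above. Notably, the argument does not require any nonlinear compactness or passage to the limit in the Navier-Stokes system itself; only the quantitative bound of Theorem \ref{thm:constant-shear} and weak lower semi-continuity of the $L^2$ norm are invoked.
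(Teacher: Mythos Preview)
Your argument is correct and aligns with the paper's intent. The paper does not give a detailed proof of this corollary; it only records the observation following the statement that the $\unu$ are uniformly bounded in $L^\infty(\R^+;L^2(\Omega))$ and hence converge weakly in $L^2_{t,x}$ along subsequences. Your proof is precisely the careful execution of that sketch: integrate the pointwise bound from Theorem~\ref{thm:constant-shear} against a nonnegative test function in time, pass to the limit via weak lower semicontinuity of the $L^2$ norm, and recover the a.e.\ pointwise inequality. Nothing more is needed, and your identification of weak-$*$ convergence in $L^\infty_t L^2_x$ (hence weak in $L^2_{t,x}$) as the relevant topology matches the paper's remark exactly.
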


Note that the solutions $\unu$ are uniformly bounded in $L ^\infty (\R ^+, L^2 (\Omega))$. Therefore they converge weakly up to a subsequence in $L ^2 _{t, x}$.

\vskip0.3cm

This result bets on the fact that the double limit to $A e _1$ in the inviscid asymptotic may fail, which is related to the physical relevance of the solutions constructed by convex integration. An interesting question is whether such solutions can be themselves obtained via double limit in the inviscid asymptotic. A first result in this direction was provided by Buckmaster and Vicol \cite{BuckmasterVicol2019} where they constructed via convex integration, in the case without boundary, spurious solutions at the level of Navier-Stokes. They show that the inviscid limit of this family of Navier-Stokes solutions can converge to spurious solutions of Euler. However, these spurious solutions constructed at the level of Navier-Stokes do not have enough regularity to be Leray-Hopf solutions, and therefore do not fit in the framework of Corollary \ref{cor:weak-limit}.



\paragraph*{Non-uniqueness and pattern predictability} 

The non-uniqueness of solutions to the Euler equation, as proved by convex integration, puts under question the ability of the model itself to predict the future.  
Theorem \ref{thm:constant-shear} provides a first example of how non-uniqueness and pattern predictability can be reconciled. The energy of the shear flow is $A ^2$, while the maximum energy of the layer separation is bounded above by $C A^3 T$. This predicts pattern visibility on a lapse of time $1/A$. On this lapse of time, the layer separation stays negligible compared to the shear flow pattern. Especially, the smaller the pattern is (small $A$), the longer the prediction stays accurate. 

\paragraph*{Inviscid limit and boundary vorticity} 
It is well known that the possible growth of the layer separation is closely related to the creation of boundary vorticity (see Kelliher \cite{Kelliher2007} for instance). 
To see this, 
we formally compute the evolution of the $L ^2$ distance between $\unu$ and $\ub$:
\begin{equation}
    \label{eqn:first-computation}
    \begin{aligned}
        \half \ddt \nmL2{\unu - \ub} ^2 &= (\unu - \ub, \pt \unu) \\
        &= -(\unu - \ub, \unu \cdot \grad \unu) - (\unu - \ub, \grad \Pnu) + \nu (\unu - \ub, \La \unu) \\
        &= \nu (\unu, \La \unu) - \nu (\ub, \La \unu) \\
        &= -\nu \nmL2{\grad \unu} ^2 - \int _{\pOmega} J[\ub] \cdot (\nu \omega ^\nu) \d x'
    \end{aligned}
\end{equation}
where $J [\ub] = n ^\perp \cdot \ub$ when $d = 2$ and $J[\ub] = n \times \ub$ when $d = 3$, and $\omega ^\nu$ is the vorticity of $\unu$. Since $\ub$ is a constant on the boundaries, it is crucial to estimate the mean boundary vorticity. If the convergence $\nu \omega ^\nu \at{\pOmega} \to 0$ holds in the average sense, then the inviscid limit would be valid. For a general static smooth solution to Euler's equation $\ub$ in a general domain $\Omega$, we only need $\nu \omega ^\nu \at{\pOmega} \to 0$ in distribution.
This convergence may fail and we could lose uniqueness, but we can still control the size of the impact from this boundary vorticity using Theorem \ref{thm:boundary-regularity} below.

\begin{figure}[htbp]
    \centering
    \includegraphics{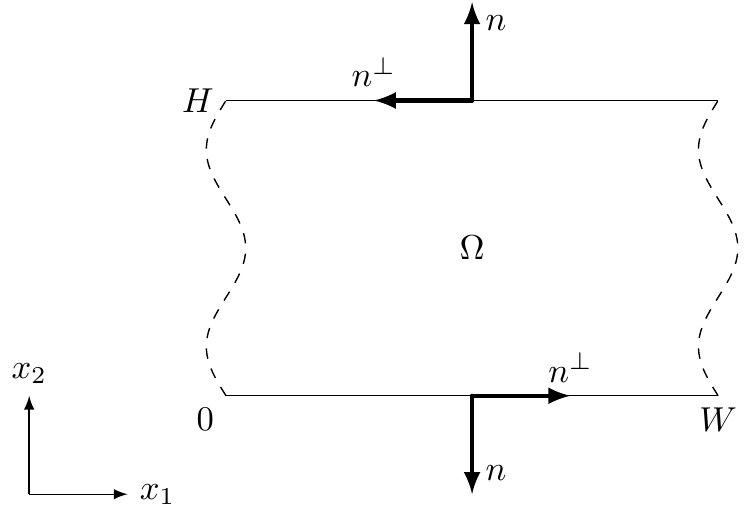}
    \caption{2D Periodic Channel}
    \label{fig:channel}
\end{figure}

Before showing the theorem, we first illustrate which estimates we may expect and how they prove Theorem \ref{thm:constant-shear}. Denote the energy dissipation by
\begin{align*}
    D := \nu \nmLWT2{\grad \unu} ^2.
\end{align*}
If we take the curl of \eqref{eqn:nse-nu}, we have the vorticity equation, 
\begin{align*}
    \pt \omega + u \cdot \grad \omega = \nu \La \omega + \omega \cdot \grad u.
\end{align*}
The main difficulties are due to the 
transport term $u \cdot \grad \omega$, and the boundary. Let us put aside those two difficulties for now, and focus on the other terms.  Then the regularity we could expect for $\omega$ is at best
\begin{align*}
    \nu ^2 \nmLWT1{\grad ^2 \omega} \lesssim _d \nu \nmLWT1{\omega \cdot \grad u} \le D.
\end{align*}
Here $A \lesssim _d B$ means $A \le C (d) B$ for some constant $C (d)$ depending in dimension $d$ only.
This is not rigorous because the parabolic regularization is false in $L ^1$, but let us also ignore this issue for the moment. By interpolation, we have 
\begin{align*}
    \nu ^\frac32 \nmLWT{\frac32}{\grad ^\frac23 \omega} ^\frac32 \lesssim _d \pth{
        \nu ^2 \nmLWT1{\grad ^2 \omega} 
    } ^\half \pth{
        \nu \nmLWT2{\omega} ^2 
    } ^\half \lesssim _d D.
\end{align*}
Finally the trace theorem suggests that (again, this is the borderline case for the trace theorem, so in no way a rigorous proof)
\begin{align}
    \label{eqn:formal-3/2}
    \nmLpWT{\frac32}{\nu \omega} ^\frac32 \lesssim _d D.
\end{align}
Using this $L ^{\frac32}$ estimate, if we integrate \eqref{eqn:first-computation} from $0$ to $T$, we have 
\begin{align*}
    & \half \nmLW2{\unu - \ub} ^2 (T) + D \\
    & \qquad \le \half \nmLW2{\unu - \ub} ^2 (0) + \nmLpWT1{J[\ub] \cdot \nu \omega ^\nu} \\
    & \qquad \le \half \nmLW2{\unu - \ub} ^2 (0) + 
    \nmLpWT{\frac32}{\nu \omega ^\nu} 
    \nmLpWT3{\ub} \\ 
    & \qquad \le \half \nmLW2{\unu - \ub} ^2 (0) + 
    \half D + C A ^3 T \abs{\pOmega} 
\end{align*}
for some constant $C$ depending on $d$ only. By absorbing $\half D$ to the left we finish the proof of Theorem \ref{thm:constant-shear}. Note however, that this direct proof collapses due to the transport term. In dimension three, $u$ can be controlled at best in $L^{10/3}_{t,x}$ while the best control of $\nabla \omega$ is in the Lorentz spaces $L^{4/3,q}_{t,x}$ for any $q>4/3$ (see \cite{Vasseur2021}). But this is far from enough to bound the transport term $u\nabla \omega$ in $L^1_{t,x}$. In dimension 2, the transport term can almost be controlled in $L^1$. But the bound is in negative power of $\nu$ and so is useless for the asymptotic limit. However, we can use blow-up techniques inspired by \cite{Vasseur2010} (see also \cite{Choi2014, Vasseur2021}) which naturally deplete the strength of the transport term. 

\paragraph*{Boundary vorticity control for the unscaled Navier-Stokes equation}

In the review paper \cite{Maekawa2018}, Maekawa and Mazzucato summarized the difficulties of   considering inviscid limit with boundary:
\begin{quote}
Mathematically, the main difficulty in the case of the no-slip boundary condition is the lack of a priori estimates on strong enough norms to pass to the limit, which in turn is due to the lack of a useful boundary condition for vorticity or pressure.     
\end{quote}
Following this remark, our proof relies on a new boundary vorticity control. This is a regularization result for the unscaled Navier-Stokes equation. However, it is remarkable that this estimate is rescalable through the inviscid limit $\nu\to0$. The strategy of looking for uniform estimates with respect to the inviscid scaling was first introduced for 1D conservation laws in \cite{Kang2021}. It was successfully applied to obtain the unconditional double limit inviscid asymptotic in the case of a single shock  \cite{Kang2021Inven}. 
Note that if $(\unu, P ^\nu)$ is a solution to \eqref{eqn:nse-nu}, then $u (t, x) = \unu (\nu t, \nu x)$, $P (t, x) = P ^\nu (\nu t, \nu x)$ solves the Navier-Stokes equation with unit viscosity coefficient in $(0, T / \nu) \times (\Omega / \nu)$:
\begin{align}
    \label{eqn:nse}
    \tag{$\text{NSE}$}
        \pt u + u \cdot \grad u + \grad P = \La u 
        , \qquad
        \div u = 0 
        .
\end{align}
The regularization result on the vorticity at the boundary is as follows. 

\begin{theorem}[Boundary Regularity]
    \label{thm:boundary-regularity}
    There exists a universal constant $C > 0$ such that the following holds.
    Let $\Omega$ be a periodic channel of period $W$ and height $H$ of dimension $d = 2$ or $3$. 
    For any Leray-Hopf solution $u$ to {\upshape (\hyperref[eqn:nse-nu]{$\text{NSE}_1$})} in $(0, T) \times \Omega$, there exists a parabolic dyadic decomposition \footnote{A dyadic decomposition into cubes of parabolic scaling. See Definition \ref{def:parabolic-dyadic-decomposition}.} 
    \[
        \overline{(0, T) \times \pOmega} = \overline{
            \bigcup _{i} \, (s ^i, t ^i) \times \bar B _{r _i} (x ^i)
        },
    \]
    where $0 \le s ^i < t ^i \le T$, $0 < r ^i < \frac W2$, $x ^i \in \pOmega$, and 
    \[
        \bar B _{r} (y) = \set{(x', x _d) \in \pOmega: \nor {x' - y'} _{\ell ^\infty} < r, x _d = y _d}
    \] 
    is a box of dimension $d - 1$ in $\pOmega$, such that the following is true. Define a piecewise constant function $\tomega: (0, T) \times \pOmega \to \R$ by taking averages
    \[
        \tomega (t, x) = \frac1{\abs{\bar B _{r ^i}}} \int _{\bar B _{r ^i} (x ^i)} \abs{
            \frac1{t ^i - s ^i} \int _{s ^i} ^{t ^i} \omega \dt
        } \dx', \qquad \forr t \in (s ^i, t ^i), x \in \bar B _{r ^i} (x ^i).
    \]
    Then 
    \begin{align*}
        \nmLpWT{\frac32, \infty}{\tomega \indtomega} ^{\frac32} \le C \nmLWT2{\grad u} ^2.
    \end{align*}
\end{theorem}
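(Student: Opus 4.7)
My approach is a two-step strategy: a parabolic dyadic stopping-time construction that produces the decomposition of $\overline{(0,T) \times \pOmega}$ together with a natural normalization scale $r^i$ on each box; and a blow-up / compactness lemma that converts local interior energy dissipation into a pointwise bound on the averaged boundary vorticity. The weak $L^{3/2, \infty}$ estimate then follows from a distributional summation.

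For the decomposition, I would run a stopping-time procedure on parabolic dyadic cubes anchored to $\pOmega$. Starting from the maximal admissible scale, which is governed by $r^2 \le t$, $r \le W/2$ and $r \le H$ in order for a parabolic enlargement $Q^{i*}$ of the box to fit inside $(0,T) \times \Omega$, I would subdivide any box whose normalized boundary vorticity average is out of balance with the local interior energy dissipation on $Q^{i*}$. This yields a disjoint cover by boxes $(s^i, t^i) \times \bar B_{r^i}(x^i)$. On each such box, either $\tomega$ is quantitatively controlled by the local dissipation, or it is no larger than $(r^i)^{-2}$; the thresholds $1/t$, $1/W^2$, $1/H^2$ inside $\indtomega$ are exactly the barriers that prevent further refinement, so this indicator precisely filters out the second, harmless alternative.

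The core of the argument is then the following local lemma, a rigorous version of the formal interpolation chain that led to \eqref{eqn:formal-3/2}: on the unit parabolic half-cube, any Leray--Hopf solution $u$ of {\upshape(NSE)} with $\nmLWT2{\grad u}^2 \le M$ satisfies $\tomega \lesssim 1 + M^{2/3}$ on the flat boundary face. The parabolic self-similarity $u \mapsto \lambda u(\lambda^2 t, \lambda x)$ of {\upshape(NSE)} then transfers this to each $Q^i$ in the form $(r^i)^{d+1}\tomega_i^{3/2} \lesssim \int_{Q^{i*}} \abs{\grad u}^2$. I would prove the lemma by contradiction and compactness: if a sequence $u_n$ of Leray--Hopf solutions on the unit half-cube had bounded dissipation but $\tomega(u_n) \to \infty$, rescale by the diverging vorticity amplitude so that the new sequence has vanishing dissipation but unit-order boundary vorticity average; extract a weak subsequential limit compatible with the no-slip and divergence-free constraints; and derive a contradiction from the fact that such a dissipation-free limit must have vanishing boundary vorticity. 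This is precisely the blow-up mechanism of \cite{Vasseur2010, Vasseur2021} that depletes the transport nonlinearity $u \cdot \grad \omega$ and renders it subcritical in the limit.

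Given the local lemma, the weak $L^{3/2, \infty}$ bound follows immediately: $\set{\tomega \indtomega > \lambda}$ is a union of boxes $Q^i$ on each of which $\lambda^{3/2} \abs{Q^i} \lesssim \int_{Q^{i*}} \abs{\grad u}^2$, and bounded overlap of the enlargements $Q^{i*}$, built into the parabolic dyadic structure, lets me sum to obtain $\lambda^{3/2} \abs{\set{\tomega \indtomega > \lambda}} \lesssim \nmLWT2{\grad u}^2$. The hard step is the blow-up lemma itself: as the Maekawa--Mazzucato quote emphasizes, $\omega$ has no useful boundary condition and cannot be compactified directly on $\pOmega$. Compactness must be arranged at the level of $u$ and the associated pressure, and the delicate point is to tune the rescaling so that the limit equation is regular enough to make sense of the boundary vorticity trace yet degenerate enough, with no residual dissipation, to force it to vanish --- this is where the bulk of the technical work will lie.
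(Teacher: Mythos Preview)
Your global architecture --- a parabolic Calder\'on--Zygmund stopping time on boundary cubes, a local unit-scale lemma transported by the canonical scaling to each $Q^i$, and a distributional summation via bounded overlap --- is essentially the paper's. The paper implements the summation through the weak-$(1,1)$ bound for the parabolic maximal function rather than overlap counting, but this is cosmetic. The substantive divergence is your proposed proof of the local lemma by contradiction and compactness, and here there is a genuine gap.

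The rescaling you describe cannot exist: under the only Navier--Stokes-preserving scaling $u\mapsto\lambda u(\lambda^2 t,\lambda x)$, the boundary-cube average $\tomega$ picks up $\lambda^2$ and the dissipation $\int|\nabla u|^2$ picks up $\lambda^{2-d}$, so for $d=3$ sending one to zero sends the other to infinity, and for $d=2$ they are tied. Thus you cannot manufacture a sequence with ``vanishing dissipation but unit-order boundary vorticity average'' while staying in the class of NSE solutions on a fixed domain. More fundamentally, even in the small-energy regime, a weak-limit argument is blocked by the pressure: Leray--Hopf gives no local control on $P$, and the Seregin counterexample cited in the paper shows that without it the boundary Stokes estimate fails --- time oscillations of $u$ near $\pOmega$ are simply not compact. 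The paper's remedy is \emph{not} compactness but a direct estimate built on a time-smoothing trick: set $U(t,x)=\int_{t-1}^t u\,ds$, so that $\partial_t U=u(t)-u(t-1)$ is controlled in $L^2_tL^6_x$ outright, and $(U,Q)$ solves a Stokes system with force $F=(f-u\cdot\nabla u)*_t\rho\in L^\infty_tL^{3/2}_x$. This makes the pressure $Q$ locally recoverable (via Ne\v cas and then Corollary~\ref{cor:mixed}), after which an anisotropic Sobolev-type lemma (Lemma~\ref{lem:sob}) traces $\int|\nabla U|$ down to the boundary. The time-averaging is precisely what turns the uncontrollable $\partial_t u$ into something tame; it is the missing idea in your plan, and it is also why the theorem only controls the \emph{time-averaged} $\tomega$ rather than $\omega$ itself.
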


This theorem provides a ``scaling invariant'' nonlinear estimate, that is, both sides of the estimate have the same scaling under the canonical scaling of the Navier-Stokes equation $(t, x) \mapsto \e u (\e ^2 t, \e x)$. The bounds in the theorem do not depend on the size of $\Omega$ or the terminal time $T$, and we do not require any smallness for the initial energy.

The conclusion of this theorem is slightly different from what we hope in \eqref{eqn:formal-3/2}, due to some difficulties that we overlooked in the formal argument. To begin with, the higher regularity $\grad ^2 \omega \in L ^1$ is not known. As mentioned before, one reason is the transport term $u \cdot \grad \omega$ is indeed hard to control. Using blow-up techniques along the trajectories of the flow first introduced in \cite{Vasseur2010}, it was proved in \cite{Vasseur2021} that without boundary in $\Omega = \RR3$, $\grad ^2 \omega \in L ^{1,q}$ locally for $q > 1$ but miss the endpoint $L ^1$. The bounded domain is even more complicated because of the lack of convenient global control on the pressure. In turn, it means that no control on the pressure can be brought locally through the blow-up process. This poses problems when applying the boundary regularity theory for the linear evolutionary Stokes equation. Indeed, a counterexample constructed in \cite{Seregin2014} shows that we cannot control that way oscillations in time. 
The idea which remedies this problem consists in smoothing locally in time to gain some integrability. We can then apply the boundary Stokes estimate for $\int u \dt$ instead of $u$. This justifies the construction of $\tilde\omega$ via local smoothing in Theorem \ref{thm:boundary-regularity}.
Lastly, because the maximal function is not a bounded operator in $L ^1$, we only obtained weak $L ^\frac32$ norm instead of $L ^\frac32$ norm.

Note that because $J[\ub]$ is constant on the boundary $\partial \Omega$, and because $\tilde{\omega}$ is constructed via local smoothing on disjoint domains, we have 
\[
    \abs{
        \int _0 ^T \int_{\pOmega} J[\ub]\cdot \omega ^\nu \dx' \dt
    } \le \abs{
        \int _0 ^T \int_{\pOmega} J[\ub] \cdot \tomeganu \dx' \dt
    }.
\]
We can then apply Theorem \ref{thm:boundary-regularity},
and proceed as in the formal computation. One last difficulty is that Theorem \ref{thm:boundary-regularity} is a regularization result, and so the estimate weakens when $t$ goes to 0. Indeed, it controls only $\tomega > \maxs{\frac1t, \frac1{W ^2}, \frac1{H ^2}}$. If we integrate the remainder, there will be a logarithmic singularity at $t = 0$. To avoid this, we apply the vorticity bound only in the time interval $t \in (T _\nu, T)$ for some small time $T _\nu \approx \nu ^3$, and for $t \in (0, T _\nu)$ we use a very short time stability of a stable Prandtl layer to bridge the gap. 

\paragraph*{General case}

We actually  do the proof in a slightly more general setting. We will consider a periodic channel 
with width $W$ and  height $H$, where the physical boundary are localized at $x_d=0$ and $x_d=H$ (see Figure \ref{fig:channel}):
\begin{align*}
    \Omega = \set{
        (x', x _d): 0 \le x _d \le H, x' \in 
        [0, W] \per ^ {d - 1}
    }.
\end{align*}
The following theorem estimates the layer separation for a more general shear flow $\ub$ of the following form:
\[
    \ub (x) = \begin{cases}
        \Ub (x _2) e _1 & d = 2 \\
        \Ub _1 (x _3) e _1 + \Ub _2 (x _3) e _2 & d = 3 \quad .
    \end{cases}
\]
In this configuration, we define the Reynolds number as $$\Re = \frac{A H}{\nu}$$ where $A = \nmLpW{\infty}{\ub}$ is the boundary shear.

\begin{theorem}[General Shear Flow]
    \label{thm:main}
    There exists a universal constant $C > 0$ such that the following holds. Let $\Omega$ be a bounded periodic channel with period $W$ and height $H$ in $\Rd$ with $d = 2$ or $3$. Let $\ub$ be a static shear flow in $\Omega$ with bounded vorticity, and let $\unu$ be a Leray-Hopf solution to \eqref{eqn:nse-nu}. For a given $\ub$ defined as above, denote the maximum shear, boundary velocity, and kinetic energy of $\ub$ by 
    \begin{align*}
        G &:= \nmLW{\infty}{\grad \ub}, 
        &
        A &:= \nmLpW{\infty}\ub, 
        &
        E &:= \nmLW2{\ub} ^2.    
    \end{align*}
    For any $T > 0$, we have  
    \begin{align*}
        & \sup _{0 \le t \le T} \set{
            \nmLW2{\unu - \ub} ^2 (t) + \frac\nu2 \nmLWt2{\grad \unu} ^2
        } \\
        & \qquad \le \exp(2 G T) 
        \Biggl\{
            4 \nmLW2{\unu (0) - \ub} ^2 + 
            2 \nu G ^2 T \abs{\Omega} + C A ^2 \abs{\Omega} \Re \inv \log \pth{2 + \Re} \\
        &\qquad \qquad \qquad \qquad \quad + 2 \Re \inv E + C A ^3 T \abs{\pOmega} \maxs{H / W, 1} ^{2}
        \Biggr\}.
    \end{align*}
\end{theorem}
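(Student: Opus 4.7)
The plan is to derive an energy identity for $v^\nu := \unu - \ub$ and then focus on the resulting boundary flux, which is precisely where Theorem~\ref{thm:boundary-regularity} enters. Combining the Leray--Hopf inequality for $\unu$ with the identity obtained by testing \eqref{eqn:nse-nu} against the steady shear $\ub$, and exploiting the shear structure (which forces $\ub \cdot \grad \ub = 0$, makes $(v^\nu \cdot \grad \ub, \ub)$ and $(\ub \cdot \grad v^\nu, \ub)$ vanish after periodic averaging in the tangential variables, and eliminates the pressure boundary term because $\ub \cdot n = 0$), I arrive at
\begin{align*}
    \half \ddt \nmLW2{v^\nu}^2 + \nu \nmLW2{\grad \unu}^2
    &\le - (v^\nu \cdot \grad \ub, v^\nu) + \nu (\grad \unu, \grad \ub) \\
    &\qquad - \int_{\pOmega} J[\ub] \cdot \nu \omega^\nu \d x'.
\end{align*}
The linear term is bounded by $G \nmLW2{v^\nu}^2$ and will eventually produce the Gronwall factor $e^{2GT}$; the viscous cross-term is split via Young's inequality into $\frac{\nu}{4}\nmLW2{\grad \unu}^2$ (absorbed into the left-hand side) and $\nu G^2 \abs{\Omega}$ per unit time (integrating to $2\nu G^2 T \abs{\Omega}$). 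All the analytic work is concentrated in the boundary flux.

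To control that flux I rescale the equation to unit viscosity via $(t,x)\mapsto(t/\nu, x/\nu)$ and invoke Theorem~\ref{thm:boundary-regularity}. Because $J[\ub]$ is constant in the tangential variables on each face of $\pOmega$ and constant in time, on every cylinder of the parabolic dyadic decomposition the integral of $J[\ub]\cdot \omega^\nu$ agrees up to sign with the integral of $\abs{J[\ub]}\,\tomeganu$ (by triangle inequality, after pulling $J[\ub]$ out of the spatial and temporal averages), so that
\[
\abs{\int_0^T \int_{\pOmega} J[\ub]\cdot \nu \omega^\nu \d x'\dt}
\le \int_0^T \int_{\pOmega} \abs{J[\ub]}\, \nu \tomeganu \d x' \dt.
\]
Split the right-hand side according to whether $\nu \tomeganu$ exceeds $\maxs{\frac{\nu}{t}, \frac{\nu^2}{W^2}, \frac{\nu^2}{H^2}}$. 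On the large part, Hölder in the Lorentz duality $L^{3,1}$--$L^{3/2,\infty}$ combined with the rescaled Theorem~\ref{thm:boundary-regularity} (which yields $\nor{\nu \tomeganu \indtomeganu}_{L^{3/2,\infty}((0,T)\times\pOmega)}^{3/2} \lesssim \nu \nmLWT2{\grad \unu}^2$) and Young's inequality produces the layer-separation contribution $C A^3 T \abs{\pOmega} \max(H/W, 1)^2$ while absorbing another small fraction of $\nu \nmLWT2{\grad \unu}^2$ on the left-hand side; the geometric factor $\max(H/W,1)^2$ arises when estimating $\nor{J[\ub]}_{L^{3,1}(\pOmega)}$ on an elongated boundary.

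The main obstacle will be the logarithmic singularity at $t = 0$: on the ``small'' side of the threshold split the integrand is only pointwise dominated by $A\maxs{\frac{\nu}{t}, \frac{\nu^2}{W^2}, \frac{\nu^2}{H^2}}$, and $\nu/t$ is not integrable at $0$. Following the strategy announced in the introduction, I introduce a short cutoff $T_\nu$ and treat $(0,T_\nu)$ and $(T_\nu, T)$ separately. On $(T_\nu, T)$ the small part integrates to $\lesssim A\nu\abs{\pOmega}\log(T/T_\nu)$, which, after an appropriate choice of $T_\nu$ proportional to a power of $\nu/A$, yields the $C A^2 \abs{\Omega}\, \Re\inv \log(2+\Re)$ contribution. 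On $(0, T_\nu)$, where Theorem~\ref{thm:boundary-regularity} gives no usable information, I rely on a short-time stability argument for $v^\nu$ which propagates the initial $L^2$ bound up to a correction of size $\Re\inv E$ coming from the thin viscous layer; patching the two regimes via Young's inequality at $t = T_\nu$ is what produces the constant $4$ in front of $\nmLW2{\unu(0)-\ub}^2$ in the final statement. A concluding Gronwall with coefficient $2G$ absorbs the linear perturbation term and assembles all the contributions into the stated bound.
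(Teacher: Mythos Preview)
Your overall architecture matches the paper's exactly: energy balance for $u^\nu-\bar u$, boundary flux controlled via the rescaled Theorem~\ref{thm:boundary-regularity} with a large/small split, a short-time cutoff $T_\nu$ to kill the $\nu/t$ singularity, and a final Gr\"onwall.  Two points need attention.

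\textbf{The short-time step is the genuine gap.} You write ``short-time stability argument for $v^\nu$ which propagates the initial $L^2$ bound up to a correction of size $\Re^{-1}E$'', but you do not say what this argument is.  On $(0,T_\nu)$ the boundary flux $\int_{\pOmega} J[\bar u]\cdot\nu\omega^\nu$ is still present in your energy identity and Theorem~\ref{thm:boundary-regularity} gives nothing there.  The paper resolves this by inserting the Prandtl layer $u^\nu_{\mathrm{Pr}}$ solving the 1D heat equation \eqref{eqn:prandtl-nu} with data $\bar U$ and no-slip boundary.  Since both $u^\nu$ and $u^\nu_{\mathrm{Pr}}$ vanish on $\pOmega$, the comparison $u^\nu-u^\nu_{\mathrm{Pr}}$ has \emph{no} boundary term; it is controlled by Gr\"onwall with coefficient $\|\grad u^\nu_{\mathrm{Pr}}\|_{L^\infty}\lesssim (\nu t)^{-3/4}E^{1/2}|\pOmega|^{-1/2}$ (Lemma~\ref{lem:heat}), whose $L^1_t$ norm is $O(1)$ precisely when $T_\nu\lesssim T_*:=C\,E^{-2}|\pOmega|^2\nu^3$.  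The remaining piece $u^\nu_{\mathrm{Pr}}-\bar u$ does carry a boundary flux, but it is the explicit one $\nu\int_{\pOmega}\bar u\cdot\partial_n u^\nu_{\mathrm{Pr}}$, bounded by $A\nu|\pOmega|\|\grad u^\nu_{\mathrm{Pr}}\|_{L^1_tL^\infty_x}\lesssim A^2|\Omega|\Re^{-1}$.  Without this intermediate comparison your sketch has no mechanism to handle the flux on $(0,T_\nu)$.

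\textbf{Two minor misattributions.} The factor $\max\{H/W,1\}^2$ does \emph{not} come from $\|J[\bar u]\|_{L^{3,1}}$; that norm is simply $\lesssim A(T|\pOmega|)^{1/3}$ and yields $CA^3T|\pOmega|$ with no aspect ratio.  The factor appears instead on the \emph{small} side of the split, from $\int_{T_\nu}^T\int_{\pOmega}\nu^2\min\{W,H\}^{-2}=A^3T|\pOmega|\Re^{-2}\max\{H/W,1\}^2$, and survives only after invoking $\Re^{-2}\le C$ in the large-$\Re$ regime (small $\Re$ being handled trivially by the energy bound).  Likewise the $2\Re^{-1}E$ term does not come from the Prandtl correction; it emerges from the $\log(T/T_\nu)$ piece once $T_\nu\sim T_*\sim \nu^3 E^{-2}|\pOmega|^2$ is inserted and the resulting $\log$ is split as $\log(4T/T_*)\le 3\log\Re+2E/(A^2|\Omega|)+AT/H+C$.
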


Note that Theorem \ref{thm:constant-shear} is a direct consequence of Theorem \ref{thm:main} with $H=W=1$, $\Ub=A$ for $d=2$, and $\Ub_1=A, \Ub_2=0$ for $d=3$.

This paper is organized as follows. We first introduce necessary tools in Section \ref{sec:prelimilary}. The boundary vorticity estimate and the proof of Theorem \ref{thm:boundary-regularity} is shown in Section \ref{sec:boundary}. In Section \ref{sec:main} we finish the proof of the main result, which are Theorem \ref{thm:constant-shear} and Theorem \ref{thm:main}. Finally, we prove Proposition \ref{prop-convex} in the appendix.

\section{Notations and Preliminary}
\label{sec:prelimilary}

We begin with some notations. 
We will be working with boxes more often than balls. For this reason, let us denote the spatial box and the space-time cube of radius $r$ by
\begin{align*}
    B _r &:= \set{x \in \Rd: \nor{x} _{\ell ^\infty} < r}, & 
    Q _r &:= (-r ^2, 0) \times B _r.
\end{align*}
We denote the same box and cube centered at $x$ and $(t, x)$ by $B _r (x)$ and $Q _r (t, x)$ respectively. Near the boundary $\set{x _d = 0}$, we denote the half-box and its boundary part by 
\begin{align*}
    \Bp r &:= \set{(x', x _d): \nors{x '} _{\ell ^\infty} < r, 0 < x _d < r}, &
    \Bb r &:= \set{(x', 0): \nors{x '} _{\ell ^\infty} < r},
\end{align*}
and denote their space-time version by
\begin{align*}
    \Qp r &= (-r ^2, 0) \times \Bp r, 
    &
    \Qb r &= (-r ^2, 0) \times \Bb r. 
\end{align*}
Finally, for a bounded set $\Omega$ and $f \in L ^2 (\Omega)$, we denote the average of $f$ in $\Omega$ as 
\[
    \fint _\Omega f \dx = \frac1{\abs{\Omega}} \int _\Omega f \dx.
\]

In this section, we provide some useful preliminary results and some corollaries, which will be used later in the paper. Most are widely known, and we do not claim any originality in the proof, but we include them here for completeness.

\subsection{Evolutionary Stokes Equation} 

Let $(u, P)$ be the solution to the following Stokes equation.
\begin{align}
    \label{eqn:stokes}
    \tag{SE}
    \begin{cases}
        \pt u + \grad P = \La u + f & \inn (0, T) \times \Omega \\
        \div u = 0 & \inn (0, T) \times \Omega
    \end{cases}.
\end{align}
Recall the following estimates on Stokes equations,
which can be found in the book of Seregin \cite{Seregin2014}.

\begin{theorem}[Cauchy Problem, Section {4.4} Theorem 4.5]
    \label{thm:cauchy}
    Let $\Omega$ be a bounded domain with smooth boundary. Let $1 < p, q < \infty$, and $f \in \Lpq pq{0,T}{\Omega}$. There exists a unique solution $(u, P)$ to \eqref{eqn:stokes} such that 
    \begin{enumerate}[\upshape (1)]
        \item $u$ satisfies the zero initial-boundary condition: 
        \begin{align*}
            u &= 0 \att t = 0, \\
            u &= 0 \onn (0, T) \times \pOmega.
        \end{align*}

        \item $P$ satisfies the zero mean condition: 
        \begin{align*}
            \int _{\Omega} P (t, x) \dx = 0 \text{ at any } t \in (0, T).
        \end{align*}
        
    \end{enumerate}
    Moreover, we have the coercive estimate 
    \begin{align*}
        \nmpq pq{\vert\pt u\vert + \vert\grad ^2 u\vert + \vert\grad P\vert} \le C (\Omega, p, q) \nmpq pqf.
    \end{align*}
\end{theorem}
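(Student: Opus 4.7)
The plan is to establish existence, uniqueness, and the coercive estimate in parallel via a localization and perturbation argument, following Solonnikov's strategy. First I would apply the Helmholtz decomposition $f = \mathbb{P}f + \nabla \varphi$ on $L^q(\Omega)$, where $\mathbb{P}$ is the Leray projector; the gradient part is absorbable into the pressure, so it suffices to treat a divergence-free right-hand side. The target is then to solve $\partial_t u - \mathbb{P}\La u = \mathbb{P} f$ with zero initial/boundary data in the maximal-regularity space $W^{1,p}_t L^q_x \cap L^p_t W^{2,q}_x$, and to recover $P$ afterward from $\grad P = \La u + f - \partial_t u$ together with the zero-mean normalization.

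The maximal regularity itself I would build in three geometric layers. In the whole space $\Rd$, the solution is given by convolution of $\mathbb{P}f$ with the heat kernel, and $\partial_t u$, $\grad^2 u$ are expressed as singular integrals with Calder\'on--Zygmund kernels; the desired bound follows from the Benedek--Calder\'on--Panzone vector-valued extension of the Calder\'on--Zygmund theorem. In the half-space $\R^d_+$ with no-slip data, I would use Solonnikov's explicit representation of the resolvent of the Stokes system in terms of Poisson-type kernels, check that the tangential singular integrals satisfy the H\"ormander condition uniformly up to the boundary, and conclude the analogous $\Lpq pq{(0,T)}{\R^d_+}$ estimate.

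To pass to the smooth bounded domain $\Omega$, I would pick a finite atlas flattening $\pOmega$ to half-space charts, together with a partition of unity. In each boundary chart the Stokes operator becomes a bounded lower-order perturbation of the constant-coefficient half-space Stokes operator, so a freezing-of-coefficients plus small-support argument combined with the half-space estimate yields the coercive bound locally; interior patches are handled by the whole-space estimate. Gluing via the partition of unity produces commutator terms that are strictly lower order, and they can be absorbed through a Neumann series on sufficiently short time intervals, then iterated up to $T$. Uniqueness of $(u,P)$ follows by duality: a solution with $f\equiv 0$ tested against the backward adjoint Stokes problem with arbitrary smooth forcing must vanish, since integration by parts produces no boundary contribution and no initial/terminal contribution.

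The main obstacle is the half-space step. The coupling through the pressure turns $\partial_t u$ and $\grad^2 u$ into nonconvolution singular integrals whose Poisson-type kernels do not enjoy translation invariance in the normal variable, so verifying the H\"ormander cancellation condition uniformly up to $\set{x_d = 0}$ is delicate and occupies the bulk of the work. Everything else---the Helmholtz split, the whole-space estimate, the localization, and the pressure recovery via Bogovski\u\i/de Rham---is either classical or a standard perturbation argument once the half-space building block is in place.
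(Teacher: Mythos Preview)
The paper does not prove this theorem at all: it is quoted as a preliminary result from Seregin's book \cite{Seregin2014}, Section~4.4, Theorem~4.5, with no argument given. There is therefore nothing in the paper to compare your proposal against.

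That said, your outline is a faithful sketch of the Solonnikov strategy that underlies the cited result, and the ingredients you list (Helmholtz split, whole-space Calder\'on--Zygmund, explicit half-space resolvent kernels, localization via boundary charts with lower-order absorption, dual uniqueness) are the standard ones. Your identification of the half-space step as the genuine technical crux is accurate: the Poisson-type kernels in Solonnikov's representation are the place where the work lies, and the rest is routine once that block is established. Nothing in your plan is wrong, but be aware that what you have written is a roadmap rather than a proof---each of the three layers, and especially the half-space kernel estimates, would require substantial detail to execute.
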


\begin{theorem}[Local Boundary Regularity, Section {7.10} Proposition 7.10]
    \label{thm:boundary}
    Let $1 < p < \infty$, $1 < q \le q' < \infty$. Assume $u, \grad u, P \in \Lt p \Lx q (\Qp2)$, $f \in \Lt p \Lx {q'} (\Qp2)$ and $(u, P)$ satisfy \eqref{eqn:stokes} in $\Omega = \Qp2$. Moreover, assume 
    \begin{align}
        \label{eqn:flat}
        u = 0 \onn \{x _d = 0\}.
    \end{align}
    Then we have the local boundary estimate
    \begin{align*}
        & \nmLtx{p}{q'}{\Qp1}{
            \vert\pt u\vert + \vert\grad ^2 u\vert + \vert\grad P\vert
        } \\
        & \qquad \le C (p, q, q') \pth{
            \nmLtx pq{\Qp2}{
                \vert u\vert + \vert\grad u\vert + \vert P\vert
            } 
            +
            \nmLtx{p}{q'}{\Qp1}{f}
        }.
    \end{align*}
\end{theorem}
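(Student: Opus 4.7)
My plan is to combine a single-scale localized Stokes estimate with a bootstrap in the spatial integrability. The single-scale step uses a cutoff together with a Bogovskii correction to reduce the local problem to an initial-boundary Stokes system on a bounded smooth domain, to which Theorem \ref{thm:cauchy} applies, producing $\pt u, \grad^2 u, \grad P \in \Lt p \Lx{q_0}$ on a smaller half-cube from $\Lt p \Lx{q_0}$-control of $u, \grad u, P$ on a larger one. Parabolic Sobolev embedding then upgrades $u, \grad u, P$ to $\Lt p \Lx{q_1}$ with $q_1 > q_0$ (as long as $q_0 < d$), and iterating on a nested sequence of shrinking half-cubes between $\Qp 2$ and $\Qp 1$ the local exponent reaches $q'$ after finitely many steps. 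A final application at exponent $q'$ absorbs the $\Lt p \Lx{q'}$-norm of $f$ and yields the stated estimate.

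\textbf{Single-scale step.} Fix radii $1 \le r < \rho \le 2$ and a smooth space-time cutoff $\chi$ with $\chi \equiv 1$ on $\Qp r$, supported in $\Qp \rho$, vanishing at $t = -\rho^2$, and vanishing on the lateral and upper parts of $\partial \Bp \rho$ (but not on the flat boundary $\Bb \rho$). Write $\tilde P := P - \fint _{\Bp \rho} P$ for the spatially mean-zero pressure, and set $v = \chi u$, $\pi = \chi \tilde P$. The pair $(v, \pi)$ solves
\begin{align*}
    \pt v + \grad \pi - \La v &= \chi f + (\pt \chi - \La \chi) u - 2 \grad \chi \cdot \grad u + \tilde P \grad \chi, \\
    \div v &= \grad \chi \cdot u,
\end{align*}
with $v = 0$ on $\partial \Qp \rho$ and at $t = -\rho^2$. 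To restore the divergence-free condition, solve the Bogovskii problem $\div w = \grad \chi \cdot u$ on $\Bp \rho$ with $w$ vanishing on $\partial \Bp \rho$; the compatibility condition holds because $u$ is divergence-free and vanishes on $\Bb \rho$ while $\chi$ vanishes elsewhere on $\partial \Bp \rho$. Standard Bogovskii bounds yield $w, \grad w, \pt w \in \Lt p \Lx{q_0}$ controlled by $\nors u _{\Lt p \Lx{q_0}(\Qp \rho)} + \nors{\grad u} _{\Lt p \Lx{q_0}(\Qp \rho)}$. Applying Theorem \ref{thm:cauchy} to the corrected pair $(v - w, \pi - \fint \pi)$ on a smooth bounded domain enveloping $\Bp \rho$ then produces
\begin{align*}
    \nmLtx p{q_0}{\Qp r}{\abs{\pt u} + \abs{\grad^2 u} + \abs{\grad P}} \le C \pth{
        \nmLtx p{q_0}{\Qp \rho}{\abs u + \abs{\grad u} + \abs P} + \nmLtx p{q_0}{\Qp \rho}{f}
    }.
\end{align*}

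\textbf{Bootstrap and main obstacle.} Iterating the single-scale step on a nested sequence of half-cubes $\Qp{r_k}$ with $2 = r_0 > r_1 > \dots > 1$ and a strictly increasing sequence of exponents $q = q_0 < q_1 < \dots$ (with $q_{k+1} = d q_k / (d - q_k)$ while $q_k < d$, and $q_{k+1}$ arbitrarily large once $q_k \ge d$), the exponents reach or exceed $q'$ after a finite number of steps depending only on $d, q, q'$; a final application at exponent $q'$ then gives the stated bound. The delicate point is the pressure: because $P$ is defined only up to a time-dependent constant, localization turns the pressure gradient into a genuine forcing term $\tilde P \grad \chi$ rather than a gradient, which is precisely why $\nors P _{\Lt p \Lx{q_0}}$ must appear on the right-hand side, and why the bootstrap must carry $\tilde P$ through Poincaré-Sobolev at each iteration. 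A secondary subtlety is that the Bogovskii corrector must be estimated jointly with $\pt w$, handled by differentiating the divergence equation in time and using the vanishing of $w$ at the initial time. Once these points are addressed, the iteration is straightforward and the cumulative constants depend only on $d$, $p$, $q$, $q'$, as desired.
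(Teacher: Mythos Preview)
The paper does not prove this theorem: it is quoted from Seregin's book (Section~7.10, Proposition~7.10) as a preliminary tool, with no argument given here. So there is no ``paper's own proof'' to compare against; you are supplying a proof where the authors simply cite the result.

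Your outline --- localize by a cutoff, correct the divergence with a Bogovskii field, apply the global Cauchy estimate of Theorem~\ref{thm:cauchy} on a smooth domain, and bootstrap the spatial exponent by Sobolev embedding on a nested family of half-cubes --- is a standard and essentially sound route to local boundary Stokes estimates. Two points deserve more care than you give them. First, Theorem~\ref{thm:cauchy} requires a smooth bounded domain, so your ``smooth bounded domain enveloping $\Bp\rho$'' must be chosen so that the flat piece $\Bb\rho$ lies on its boundary (e.g.\ $\Bp\rho$ with the lateral corners rounded off, keeping the bottom face intact); otherwise $\chi u$, extended by zero, need not satisfy the no-slip condition there.

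Second, and more seriously, your justification for the $\pt w$ bound is off: the ``vanishing of $w$ at the initial time'' is irrelevant to an $\Lt p\Lx{q_0}$ estimate. Differentiating $\div w = \grad\chi\cdot u$ in time produces $\grad\chi\cdot\pt u$, which you do not control in $L^{q_0}$ a priori, and the ``standard'' Bogovskii bound $B:L^{q_0}\to W^{1,q_0}$ alone cannot close this. The correct repair is to substitute the equation $\pt u = f + \La u - \grad P$ and rewrite $\grad\chi\cdot(\La u - \grad P)$ as an $L^{q_0}$ term plus the divergence of a compactly supported $L^{q_0}$ vector field (via identities such as $\grad\chi\cdot\grad P = \div(P\grad\chi) - P\La\chi$, and similarly for the Laplacian term). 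One then invokes, in addition to $B:L^{q_0}\to W^{1,q_0}$, the negative-order bound $B\circ\div:L^{q_0}\to L^{q_0}$ to control the divergence part. With that fix the single-scale step closes with $\pt w$ bounded by $\nors{u}+\nors{\grad u}+\nors{P}+\nors{f}$ in $\Lt p\Lx{q_0}(\Qp\rho)$, and the bootstrap proceeds exactly as you describe.
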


Combining these two estimates, we derive the following mixed case.
\begin{corollary}
    \label{cor:mixed}
    Let $1 < p _2 < p _1 < \infty$, $1 < q _1, q _2 < \infty$, $f \in \Lt {p _1} \Lx {q _1} (\Qp2)$, $u, \grad u, P \in \Lt {p _2} \Lx {q _2} (\Qp2)$. If $(u, P)$ satisfies \eqref{eqn:stokes} in $\Qp2$ and $u$ satisfies \eqref{eqn:flat}, then $u = u _1 + u _2$ satisfying for any $q' < \infty$, there exists a constant $C = C(p _1, p _2, q _1, q _2, q')$ such that 
    \begin{align*}
        & \nmLtx{p _1}{q _1}{\Qp1}{
            \vert\pt u _1\vert + \vert\grad ^2 u _1\vert
        }
        + 
        \nmLtx{p _2}{q '}{\Qp1}{
            \vert\pt u _2\vert + \vert\grad ^2 u _2\vert
        } \\
        & \qquad \le 
        C \pth{
            \nmLtx{p _1}{q _1}{\Qp2}{f}+ \nmLtx{p _2}{q _2}{\Qp2}{
                \vert u\vert + \vert\grad u\vert + \vert P\vert
            }
        }.
    \end{align*}
\end{corollary}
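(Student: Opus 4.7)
The plan is to decompose $u = u_1 + u_2$, where $u_1$ absorbs the forcing $f$ through the global Cauchy-Dirichlet theory (Theorem~\ref{thm:cauchy}) and $u_2 := u - u_1$ solves the homogeneous Stokes system, to which the purely local boundary estimate (Theorem~\ref{thm:boundary}) applies. Because Theorem~\ref{thm:cauchy} is stated on a bounded smooth domain, the preliminary step is to fix an auxiliary bounded smooth domain $\Omega^\star$ with $\Bp 2 \subset \Omega^\star$ and $\Bb 2$ contained in the flat part of $\partial \Omega^\star$; a smoothly rounded half-ball resting on $\{x_d = 0\}$ does the job.

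First I would extend $f$ by zero to $(-4, 0) \times \Omega^\star$ and apply Theorem~\ref{thm:cauchy} with $p = p_1$, $q = q_1$ to obtain $(u_1, P_1)$ solving \eqref{eqn:stokes} on that cylinder with zero initial data, zero Dirichlet data, and zero-mean pressure, together with the coercive estimate
\[
\nmLtx{p_1}{q_1}{(-4, 0) \times \Omega^\star}{\vert\pt u_1\vert + \vert\grad^2 u_1\vert + \vert\grad P_1\vert} \le C \, \nmLtx{p_1}{q_1}{\Qp 2}{f}.
\]
Combined with Poincar\'e on $W^{1, q_1}_0(\Omega^\star)$ applied to $u_1$ and Poincar\'e-Wirtinger applied to $P_1$, this promotes the Cauchy bound to control of the full $L^{p_1}_t W^{2, q_1}_x$-norm of $u_1$ and the $L^{p_1}_t W^{1, q_1}_x$-norm of $P_1$ by $\nor{f}_{L^{p_1}_t L^{q_1}_x(\Qp 2)}$.

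Next I would set $u_2 := u - u_1$ and $P_2 := P - P_1$ on $\Qp 2$. By linearity $(u_2, P_2)$ solves the homogeneous Stokes system on $\Qp 2$, and $u_2$ still satisfies \eqref{eqn:flat} since both $u$ and $u_1$ do. Then I would apply Theorem~\ref{thm:boundary} to $u_2$ with $p = p_2$, $q = q_2$, output $q'$, and zero forcing, which yields the desired $L^{p_2}_t L^{q'}_x$-control of $\pt u_2$, $\grad^2 u_2$, $\grad P_2$ on $\Qp 1$ by $\nor{u_2, \grad u_2, P_2}_{L^{p_2}_t L^{q_2}_x(\Qp 2)}$. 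Expanding this right-hand side via the triangle inequality, the $u$-contribution is exactly the second term claimed in the corollary, while the $u_1$-contribution is dominated by $\nor{f}_{L^{p_1}_t L^{q_1}_x(\Qp 2)}$ through H\"older in time (admissible since $p_2 < p_1$ and $(-4, 0)$ is bounded) followed by a space embedding on the fixed bounded domain $\Omega^\star$, with all dependence absorbed into $C = C(p_1, p_2, q_1, q_2, q')$.

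The hardest step will be this last space embedding when $q_2$ is much larger than $q_1$ and $q_1 < d/2$, where $W^{2, q_1}(\Omega^\star) \hookrightarrow L^{q_2}(\Omega^\star)$ fails directly. In that regime I would either interpolate between the $L^{p_1}_t W^{2, q_1}_x$-bound and the $L^{p_1}_t L^{q_1}_x$-bound via parabolic Gagliardo-Nirenberg, exploiting also the bound on $\pt u_1$ from the Cauchy estimate, or iterate the elliptic Sobolev embedding $L^r \hookrightarrow L^{r^*}$ finitely many times until the target integrability is reached; either route produces only an exponent-dependent constant and closes the estimate.
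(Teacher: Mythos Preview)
Your overall decomposition $u = u_1 + u_2$ via the Cauchy problem, followed by the local boundary estimate on the homogeneous remainder $u_2$, is exactly the paper's approach. The choice of auxiliary domain (containing $\Bp 2$ versus contained in $\Bp 2$) is inconsequential.

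The one genuine gap is your treatment of the case $q_1 < q_2$. To invoke Theorem~\ref{thm:boundary} with input exponent $q_2$ you need $u_1, \grad u_1, P_1 \in \Lt{p_2}\Lx{q_2}(\Qp 2)$, but the Cauchy estimate only delivers these quantities in $\Lt{p_1}\Lx{q_1}$. H\"older in time handles the temporal drop $p_1 \to p_2$, but raising the spatial integrability from $q_1$ to $q_2$ is obstructed precisely where you say, and neither of your proposed fixes actually closes it. Gagliardo--Nirenberg interpolation between $W^{2,q_1}_x$ and $L^{q_1}_x$ recovers nothing beyond the standard Sobolev gain (and the parabolic version using $\pt u_1 \in L^{q_1}_x$ does not help for $\grad u_1$ or $P_1$, for which no time derivative is controlled). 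The claimed ``iteration'' of Sobolev embedding cannot be run because you never acquire $\grad^2 u_1$ or $\grad P_1$ in anything better than $L^{q_1}_x$: Theorem~\ref{thm:cauchy} is global and does not bootstrap. For, say, $d = 3$, $q_1$ close to $1$, and $q_2$ large, the argument stalls.

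The paper sidesteps this entirely with a one-line observation: apply Theorem~\ref{thm:boundary} to $u_2$ with input spatial exponent $q = \min\{q_1, q_2\}$ rather than $q_2$. Then $u_1, \grad u_1, P_1 \in \Lt{p_2}\Lx{q}$ follows from plain H\"older in time and space (both exponents are lowered on a bounded cylinder), the $u$-part is likewise lowered from $L^{q_2}_x$ to $L^q_x$, and since $u_2$ solves the Stokes system with zero force, Theorem~\ref{thm:boundary} bootstraps the output from $q$ to any $q' < \infty$. No Sobolev embedding is needed at all.
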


\begin{proof}
    Let $\Omega'$ be a smooth domain such that $B ^+ _{\frac32} \subset \Omega' \subset B ^+ _2$. Define $u _1$ to be the solution to the Cauchy problem in $\Omega'$ with force $f$. By Theorem \ref{thm:cauchy}, we obtain
    \begin{align*}
        \nmLpqIS{p _1}{-4, 0}{q _1}{\Omega '}{
            \vert\pt u _1\vert + \vert\grad ^2 u _1\vert + \vert\grad P _1\vert
        } \le C \nmLtx{p _1}{q _1}{\Qp2}{f}.
    \end{align*}
    Since $u _1$ has trace zero, $P _1$ has mean zero, we have 
    \begin{align*}
        \nmLpqIS{p _1}{-4, 0}{q _1}{\Omega '}{
            \vert u _1\vert + \vert\grad u _1\vert + \vert P _1\vert
        } \le C \nmLtx{p _1}{q _1}{\Qp2}{f}.
    \end{align*}
    Now we define $u _2 = u - u _1, P _2 = P - P _1$. Since $p _1 > p _2$, we have 
    \begin{align*}
        & \nmLtx{p _2}{\mins{q _1, q _2}}{\Qp{{3/2}}}{
            \vert u _2\vert + \vert\grad u _2\vert + \vert P _2\vert
        } \\
        & \qquad \le C \pth{
            \nmLtx{p _1}{q _1}{\Qp2}{f} + \nmLtx{p _2}{q _2}{\Qp2}{
                \vert u\vert + \vert\grad u\vert + \vert P\vert
            }
        }.
    \end{align*}
    Note that $u _2$ solves \eqref{eqn:stokes} with zero force term in $\Qp\frac32$,
    so the desired result follows by applying Theorem \ref{thm:boundary}.
\end{proof}

\subsection{Inhomogeneous Sobolev Embedding}

We show that given partial derivatives bounded in inhomogeneous Lebesgue spaces, a binary function is bounded in $L ^\infty$.

\begin{lemma}[Inhomogeneous Supercritical Sobolev Embedding]
    \label{lem:sob}
    Let $\alpha \in (0, 1)$, and $\Omega = \set{(t, z): t \in [-1, 0], z \in [0, 1]}$. Let $u \in L ^1 (\Omega)$ with weak partial derivatives bounded in inhomogeneous spaces 
    \begin{align*}
        \pt u &\in \Lt 1 \Lz \infty (\Omega) + \Lt q \Lz 1 (\Omega),
        &
        \pz u &\in \Lt p \Lz \infty (\Omega) + \Lt \infty \Lz r (\Omega),
    \end{align*}
    with $p > \frac1\alpha, q > \frac1{1 - \alpha}, r > 1$, then $u \in C (\Omega)$ is continuous with oscillation bounded by  
    \begin{align*}
        \sup _{\Omega} u - \inf _{\Omega} u = \nor{u} _{\osc (\Omega)} \le C \pth{
            \nor{\pt u} _{\Lt1 \Lz \infty + \Lt q \Lz 1}
            +
            \nor{\pz u} _{\Lt p \Lz \infty + \Lt{\infty} \Lz r}
        }
    \end{align*}
    where $C = C (p, q, r)$ depends on $p, q, r$. 
\end{lemma}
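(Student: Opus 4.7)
The lemma is an inhomogeneous, anisotropic Sobolev--Morrey embedding in two variables. The conditions $p > 1/\alpha$ and $q > 1/(1-\alpha)$ together yield $1/p + 1/q < 1$, the classical criterion for the 2D Sobolev embedding into $C^0$, while $r > 1$ is the Morrey condition for the lower-order $L^r_z$ component.

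My plan is first to decompose $\partial_t u = f_1 + f_2$ and $\partial_z u = g_1 + g_2$ according to the hypothesis, and by a standard mollification/density argument reduce to the case of smooth $u$. For fixed $(t_0, z_0) \in \Omega$, I consider anisotropic rectangles $Q_k := (t_0 - a_k, t_0) \times (z_0 - b_k, z_0 + b_k)$ with $a_k = 2^{-k}$ and $b_k = 2^{-k\alpha}$. I would show that the sequence $(u)_{Q_k}$ of rectangle averages converges to $u(t_0, z_0)$ at Lebesgue points, and bound the telescoping increments $|(u)_{Q_k} - (u)_{Q_{k+1}}|$ via Poincar\'e's inequality on $Q_k$ combined with H\"older's inequality in $t$ and $z$. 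Using the decomposition, the increments split into four contributions with geometric decay rates of the form $a_k^{1-1/q-\alpha}\,\|f_2\|_{L^q_t L^1_z}$, $a_k^{\alpha - 1/p}\,\|g_1\|_{L^p_t L^\infty_z}$, and $b_k^{1-1/r}\,\|g_2\|_{L^\infty_t L^r_z}$, all with positive exponents under the hypotheses. The $f_1$ contribution (which does not decay geometrically in $k$) is handled separately by the scale-uniform bound $\int_{t_0-a}^{t_0} \|f_1(\tau,\cdot)\|_{L^\infty_z}\,d\tau \le \|f_1\|_{L^1_t L^\infty_z}$, obtained from a direct path-integral estimate along the $t$-direction.

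\emph{Main obstacle.} The subtlest point is the coupling between pointwise-in-$t$ and pointwise-in-$z$ evaluations of the derivatives: $g_1$ is only $L^p_t$-bounded at fixed $t$, and $f_2$ is only $L^1_z$-bounded at fixed $z$, so neither gives direct $L^\infty$ control on the path integrals arising in the naive Poincar\'e estimate. The anisotropic scaling $b_k = a_k^\alpha$, with $\alpha$ satisfying $1/p < \alpha < 1 - 1/q$ (possible precisely because $1/p + 1/q < 1$), lets the dyadic scheme close: the powers of $a_k$ and $b_k$ appearing in each of the three nontrivial bounds are all positive. Summing the geometric series for $f_2, g_1, g_2$ and combining with the separate scale-uniform bound for $f_1$, then taking the supremum over $(t_0, z_0) \in \Omega$ and using $\osc u \le 2 \sup_\Omega |u - \bar u|$ for any baseline constant $\bar u$, delivers the desired oscillation estimate with a constant $C = C(p, q, r)$.
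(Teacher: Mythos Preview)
Your dyadic/Campanato scheme is sound for the three components $f_2, g_1, g_2$: the exponents $1 - 1/q - \alpha$, $\alpha - 1/p$, and $\alpha(1 - 1/r)$ are indeed all positive under the hypotheses, so those telescoping sums converge geometrically. The gap is precisely the $f_1 \in L^1_t L^\infty_z$ piece, which sits at the $L^1$ endpoint. The Poincar\'e contribution at step $k$ is $\int_{t_0 - a_k}^{t_0} \|f_1(\tau,\cdot)\|_{L^\infty_z}\,d\tau$, and since the time intervals $(t_0 - a_k, t_0)$ are \emph{nested}, summing over $k$ produces $\int \|f_1(\tau,\cdot)\|_{L^\infty_z}\,\log\frac{1}{t_0 - \tau}\,d\tau$, which diverges for generic $L^1_t$ data (take e.g.\ $f_1$ independent of $z$ with $f_1(\tau) \sim (|\tau|\log^2|\tau|)^{-1}$ near $\tau = 0$). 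Your proposed fix---a direct path integral in $t$ at fixed $z_0$---does not close either, because $\partial_t u = f_1 + f_2$ and the component $f_2 \in L^q_t L^1_z$ has no pointwise-in-$z$ meaning; you cannot isolate the $f_1$ contribution along a single line without controlling $f_2$ there as well. No linear splitting of $u$ into pieces matching the four derivative components is available, since the decompositions of $\partial_t u$ and $\partial_z u$ need not be compatible.

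The paper avoids this overcounting by integrating along the one-parameter family of curves $s \mapsto (-s, \lambda s^\alpha)$ and averaging over $\lambda \in (\tfrac12, 2)$. After the change of variables this yields a single integral over a cone-like region $\mathscr{C} = \{\tfrac12 |t|^\alpha \le z \le 2|t|^\alpha\}$ with weight $|t|^{-\alpha}$ against $\partial_t u$ and $|t|^{-1}$ against $\partial_z u$. The crucial point is that on $\mathscr{C}$ the $z$-slice has width $\sim |t|^\alpha$, so $\||t|^{-\alpha}\|_{L^1_z(\mathscr{C}_t)} \sim 1$ uniformly in $t$; this lets $|t|^{-\alpha}$ pair with $f_1 \in L^1_t L^\infty_z$ by H\"older without any logarithmic loss. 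In effect the curved paths replace your nested rectangles by a foliation in which each time $\tau$ is visited exactly once, eliminating the overcounting. If you want to salvage the discrete scheme, you would need a chain of rectangles whose time intervals are essentially disjoint (a staircase rather than a nest), which is substantially more delicate to organize; the continuous averaging over curves does this automatically.
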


\begin{figure}[htbp]
    \centering
    \includegraphics{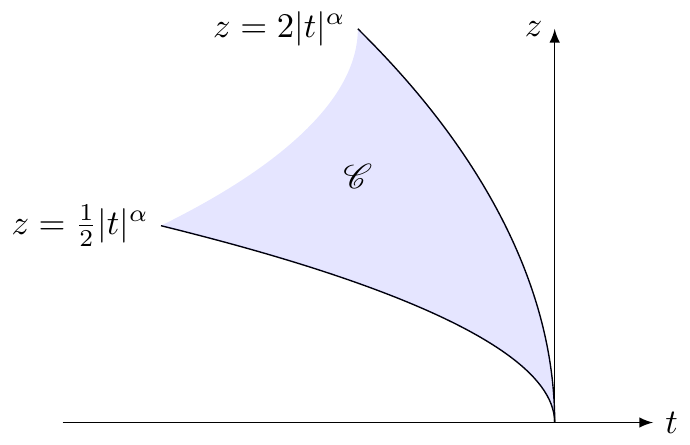}
    \caption{Inhomogeneous Sobolev Embedding}
    \label{fig:sobolev}
\end{figure}

\begin{proof}
    Up to cutoff and mollification, we may assume $u \in C ^\infty ((-\infty, 0] \times [0, \infty))$ with compact support in $2 \Omega = (-2, 0] \times [0, 2)$. Up to translation, we show $u (0, 0)$ is bounded. By the fundamental theorem of calculus, for any $\lambda > 0$, we have
    \begin{align*}
        0 &= u (0, 0) + \int _0 ^\infty \pfr{}{s} u (-s, \lambda s ^\alpha) \d s. 
    \end{align*}
    Taking average for $\lambda \in \pth{\half, 2}$ yields
    \begin{align*}
        \abs{u (0, 0)} &\le \int _{\half} ^2 \int _0 ^\infty \abs{\pt u} + \lambda \alpha s ^{\alpha - 1} \abs{\pz u} \d s \d \lambda.
    \end{align*}
    The Jacobian of $(t, z) = (s, \lambda s ^\alpha)$ is 
    \begin{align*}
        \frac{D (t, z)}{D (s, \lambda)} = \det \begin{bmatrix}
            -1 & 0 \\
            \lambda \alpha s ^{\alpha - 1} & s ^\alpha
        \end{bmatrix} = s ^\alpha = \abs t ^\alpha \sim z,
    \end{align*}
    thus we can bound $u (0, 0)$ via a change of variable by
    \begin{align*}
        \abs{u (0, 0)} 
        &\le 
        \int _{\mathscr C} \pth{\abs{\pt u} + \alpha z \abs t \inv \abs{\pz u}} \abs t ^{-\alpha} \d z \dt 
        \\
        &= 
        \int _{\mathscr C} \abs t ^{-\alpha} \abs{\pt u} + \alpha \lambda \abs t \inv \abs{\pz u} \d z \dt. 
    \end{align*}
    where $\mathscr C$ is the region illustrated in Figure \ref{fig:sobolev}.

    Now we compute inhomogeneous norms of $\abs t \inv$ and $\abs t ^{-\alpha}$ in $\mathscr C$:
    \begin{align*}
        \int _{\half \abs t ^\alpha} ^{2 \abs t ^\alpha} \abs t ^{-\alpha} \d z &= \frac32 \in \Lt\infty (-2, 0), \\
        \nor{\abs t ^{-\alpha}} _{\Lz \infty (\half \abs t ^\alpha, 2 \abs t ^\alpha)} &= \abs t ^{-\alpha} \in \Lt{q'} (-2, 0), \\
        \int _{\half \abs t ^\alpha} ^{2\abs t ^\alpha} \abs t \inv \d z &= \frac32 \abs t ^{\alpha - 1} \in \Lt{p'} (-2, 0), \\
        \nor{1/t} _{\Lz{r'}(\half \abs t ^\alpha, 2 \abs t ^\alpha)} &= \abs t \inv \pth{\frac32 \abs t ^\alpha} ^\frac1{r'} \lesssim \abs t ^{\frac\alpha{r'} - 1} \in \Lt1 (-2, 0).
    \end{align*}
    Here $p' < \frac1\alpha, q' < \frac1{1 - \alpha}, r' < \infty$ are the H\"older conjugate of $p, q, r$ respectively. In conclusion, $\abs t \inv$ and $\abs t ^{-\alpha}$ are bounded in spaces
    \begin{align*}
        \abs t ^{-\alpha} &\in \Lt \infty \Lz 1 \cap \Lt {q'} \Lz \infty, &
        \abs t \inv &\in \Lt {p'} \Lz 1 \cap \Lt 1 \Lz {r'},
    \end{align*}
    which completes the proof of this lemma by H\"older inequality.
\end{proof}

\subsection{Parabolic Maximal Function}

Let us introduce the following notion of maximal function adapted to the parabolic scaling. 

\begin{definition}[Parabolic Maximal Function]
    \label{def:maximal}
    For $f \in L _{\loc} ^1 (\R \times \Rd)$, we define the parabolic maximal function by taking the greatest mean values
    \[
        \mm f (t, x) := \sup _{r > 0} \fint _{t - r ^2} ^{t + r ^2} \fint _{B _r (x)} \vert f (s, y)\vert \d y \d s.
    \]
    For $f \in L ^1 ((0, T) \times \Omega)$ where $\Omega \subset \Rd$ is a bounded set, we can define $\mm f$ by applying the previous definition on the zero extension of $f$ in $\R \times \Rd$. 
\end{definition}

Recall the classical weak type $(1, 1)$ bound on the maximal function $\mm$: 
\[
    \nor {\mm f} _{L ^{1, \infty}} \le C _d \nor f _{L ^1}.
\]

\subsection{Lipschitz Decay of 1D Heat Equation}

We end this section by reminding the readers that solutions to the 1D heat equation have a decay rate of $t ^{-\frac34}$ in the Lipschitz norm. 
It will be useful to control the Prandtl layer in a small initial time of order $O (\nu ^3)$.
This result is very elementary. We give the proof for the sake of completeness.
\begin{lemma}
    For $z > 0$ we have
    \[ 
        \sum _{n = 1} ^\infty n ^2 e ^{-n ^2 z} < z ^{-\frac32}.
    \] 
\end{lemma}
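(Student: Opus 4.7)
The plan is to compare the sum against the Gaussian integral
\begin{align*}
\int_0^\infty x^2 e^{-x^2 z}\,dx = \frac{\sqrt\pi}{4\, z^{3/2}},
\end{align*}
taking advantage of the fact that the coefficient $\sqrt\pi/4 \approx 0.443$ sits strictly below the target constant $1$. After the natural rescaling $u = n\sqrt z$, the sum becomes a Riemann sum with mesh $\Delta := \sqrt z$ for the unimodal function $h(u) := u^2 e^{-u^2}$, which has maximum $h(1) = e^{-1}$ and total integral $\sqrt\pi/4$:
\begin{align*}
z^{3/2} \sum_{n=1}^\infty n^2 e^{-n^2 z} \;=\; \Delta \sum_{n=1}^\infty h(n\Delta).
\end{align*}

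For $z$ not too large, I would bound this Riemann sum by the integral plus a peak correction. Setting $n_* := \lfloor 1/\Delta \rfloor$, monotonicity of $h$ yields $\sum_{1 \le n < n_*} h(n\Delta)\Delta \leq \int_0^{n_*\Delta} h$ on the increasing portion (left-endpoint comparison) and $\sum_{n > n_*} h(n\Delta)\Delta \leq \int_{n_*\Delta}^\infty h$ on the decreasing portion (right-endpoint comparison), while the peak term is trivially bounded by $e^{-1}\Delta$. Summing gives $z^{3/2} \sum n^2 e^{-n^2 z} \leq \sqrt\pi/4 + e^{-1}\sqrt z$, which is strictly less than $1$ precisely when $\sqrt z < e(1 - \sqrt\pi/4)$, i.e.\ up to some threshold $z_0$ of order one.

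For $z$ beyond $z_0$, I would switch to a direct exponential estimate: since $n^2 e^{-n^2 z} \leq n^2 e^{-z}\, e^{-(n^2-1) z_0}$ whenever $z \geq z_0$, the entire series is bounded by a constant (barely exceeding $1$) times $e^{-z}$. Comparing $e^{-z}$ against $z^{-3/2}$ gives the inequality with a huge margin on $[z_0,\infty)$, since exponential decay dominates any polynomial. The main obstacle is maintaining strict inequality across both regimes; this rests entirely on the quantitative gap $1 - \sqrt\pi/4 > 0$ in the small-$z$ integral comparison, together with the exponential-versus-polynomial margin in the tail, while matching the two regimes at $z_0$ is an elementary numerical check.
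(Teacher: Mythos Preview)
Your approach is correct and essentially the same as the paper's. Both split into a small-$z$ regime, comparing the sum to the Gaussian integral $\int_0^\infty x^2 e^{-x^2}\,dx = \sqrt\pi/4 < 1$ via a Riemann-sum argument (you make the error term explicit through unimodality of $u^2 e^{-u^2}$, while the paper simply writes $O(\sqrt z)$), and a large-$z$ regime handled by exponential decay, with the numerical matching of the two regimes left as a routine check in each case.
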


\begin{proof}
    We can approximate this infinite series by
    \begin{align*}
        \sum _{n = 1} ^\infty n ^2 e ^{-n ^2 z} &= z^{-\frac32} \sum _{n = 1} ^\infty (\sqrt{z}n) ^2 e ^{-(\sqrt zn) ^2} \sqrt z \\
        &= z ^{-\frac32} \pth{
            \int _0 ^\infty x ^2 e ^{-x ^2} \dx + O (\sqrt z)
        } \\
        &= \frac{\sqrt\pi}4 z ^{-\frac32} + O (z ^{-1}),
    \end{align*}
    when $z \to 0$ is small, and 
    \begin{align*}
        \sum _{n = 1} ^\infty n ^2 e ^{-n ^2 z} 
        &\le
        \sum _{n = 1} ^\infty n ^2 e ^{-n z}  
        = \dfr{^2}{z^2} \pth{
            \sum _{n = 1} ^\infty e ^{-n z}
        }  
        = \dfr{^2}{z^2} \pthf1{
           e^z - 1
        }
        = \frac{(e^z + 1) e^z}{(e ^z - 1) ^3} \approx e ^{-z}
    \end{align*}
    when $z \to \infty$ is large. This proves that the left hand side is bounded by $C z ^{-\frac32}$ for some constant $C$, which can be easily determined by carefully examine the estimates.
\end{proof}

Using this lemma, we can compute the decay rate.

\begin{lemma}
    \label{lem:heat}
    Let $\nu > 0$, $H > 0$, and suppose $v (t, x _d)$ solves the following 1D heat equation in $[0, H]$:
    \begin{align*}
        \begin{cases}
            \pt v = \nu v _{xx} & \inn (0, \infty) \times (0, H) \\
            v = 0 & \onn (0, \infty) \times \{0, H\} \\
            v = v _0 & \att t = 0
        \end{cases}
    \end{align*}
    with $v _0 \in L ^2 (0,H)$. Then
    \begin{align*}
        \nmL \infty{\grad v (t)} \le \half (\nu t) ^{-\frac34} \nmL 2{v _0}.
    \end{align*}
\end{lemma}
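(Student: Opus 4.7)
The plan is to diagonalize the Dirichlet Laplacian on $(0,H)$ via Fourier sine series and then apply the preceding lemma. Write the initial data as
\[
    v _0 (x _d) = \sum _{n = 1} ^\infty a _n \sin \pth{\frac{n \pi x _d}{H}},
    \qquad
    a _n = \frac2H \int _0 ^H v _0 (x _d) \sin \pth{\frac{n \pi x _d}{H}} \d x _d,
\]
so that by Parseval $\sum _n a _n ^2 = \frac2H \nmL2{v _0} ^2$. Since each sine is an eigenfunction of $\nu \partial _{x _d} ^2$ with eigenvalue $-\nu (n \pi / H) ^2$, the unique solution to the heat equation is
\[
    v (t, x _d) = \sum _{n = 1} ^\infty a _n \sin \pth{\frac{n \pi x _d}H} \exp \pth{-\nu \pth{\frac{n \pi}H} ^2 t}.
\]

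Differentiating termwise and applying Cauchy--Schwarz in $n$ gives the pointwise bound
\[
    \abs{\partial _{x _d} v (t, x _d)}
    \le \pth{\sum _{n = 1} ^\infty a _n ^2} ^{\half} \pth{\sum _{n = 1} ^\infty \pth{\frac{n \pi}H} ^2 \exp \pth{-2 \nu \pth{\frac{n \pi}H} ^2 t}} ^{\half}.
\]
Setting $z = 2 \nu t (\pi / H) ^2$ and pulling the factor $(\pi/H) ^2$ outside, the previous lemma yields
\[
    \sum _{n = 1} ^\infty \pth{\frac{n \pi}H} ^2 e ^{-n ^2 z} \le \pth{\frac\pi H} ^2 z ^{-\frac32} = \frac H\pi (2 \nu t) ^{-\frac32}.
\]
Combining with the Parseval identity for $\sum a _n ^2$ we obtain
\[
    \nmL\infty{\partial _{x _d} v (t)} ^2 \le \frac 2H \nmL2{v _0} ^2 \cdot \frac H\pi (2 \nu t) ^{-\frac32} = \frac{2 ^{-\frac12}}\pi (\nu t) ^{-\frac32} \nmL2{v _0} ^2.
\]
Since $2 ^{-\frac12} / \pi < 1/4$, taking square roots gives the claimed bound $\nmL\infty{\grad v (t)} \le \half (\nu t) ^{-\frac34} \nmL2{v _0}$.

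I do not foresee any serious obstacle here: the only quantitative point to check is that the constant produced by the chain Parseval $\to$ previous lemma is indeed at most $1/2$, which is a direct numerical verification as above. One might worry about termwise differentiation of the series, but for $t > 0$ the exponential factor decays faster than any polynomial in $n$, so the differentiated series converges uniformly on $[0, H]$ and justifies all manipulations.
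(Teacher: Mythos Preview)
Your proof is correct and follows essentially the same route as the paper: expand $v_0$ in a Dirichlet sine series, write the solution explicitly, bound $\partial_{x_d} v$ pointwise via Cauchy--Schwarz in the Fourier index, and then apply the preceding lemma with $z = 2\nu\pi^2 t/H^2$ to control $\sum n^2 e^{-n^2 z}$. The only additions you make are the explicit verification that the resulting constant $2^{-1/4}\pi^{-1/2}<\tfrac12$ and a brief justification of termwise differentiation, both of which are welcome.
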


\begin{proof}
    We can write the solutions explicitly in terms of Fourier series. We expand $v _0$ by sine series as 
    \begin{align*}
        v _0 (x) = \sum _{n = 1} ^\infty b _n \sin \pthf{n \pi x}H,
    \end{align*}
    with 
    \begin{align*}
        \sum _{n = 1} ^\infty b _n ^2 = \frac2H \nmL 2{v _0} ^2 < \infty.
    \end{align*}
    The solution can be explicitly written as 
    \begin{align*}
        v (t, x) = \sum _{n = 1} ^\infty b _n \sin \pthf{n \pi x}H e ^{- \nu \frac{n ^2 \pi ^2}{H ^2} t},
    \end{align*}
    so the derivative is bounded by
    \begin{align*}
        \abs{\partial _x v (t, x)} & \le \abs{
            \sum _{n = 1} ^\infty b _n \cos \pthf{n \pi x}H \pthf{n \pi}H e ^{- \nu \frac{n ^2 \pi ^2}{H ^2} t}
        } \\
        & \le \pth{
            \sum _{n = 1} ^\infty b _n ^2
        } ^\half \pth{
            \sum _{n = 1} ^\infty \pthf{n \pi}H ^2 e ^{- 2 \nu \frac{n ^2 \pi ^2}{H ^2} t},
        } ^\half \\
        &\le \pth{
            \frac2H 
        \nmL 2{v _0} ^2
        } ^\half 
        \pthf\pi H
        \pthf{2 \nu \pi ^2 t}{H ^2} ^{-\frac34} \\
        &\le \frac12 (\nu t) ^{-\frac34} \nmL 2{v _0}
    \end{align*}
    using the previous lemma.
\end{proof}

\section{Boundary Regularity for the Navier-Stokes Equation}
\label{sec:boundary}

The goal of this section is to prove the boundary regularity for the Navier-Stokes equation with unit viscosity constant: Theorem \ref{thm:boundary-regularity}. This relies on the following local estimate. 

\begin{proposition}
    \label{thm:local}
    Suppose $(u,P)$ is a weak solution to the Navier-Stokes equation \eqref{eqn:nse} with forcing term $f \in L ^1 (-4, 0; L ^2 (\Bp2))$, such that $u \in L ^\infty(-4, 0; L ^2(\Bp2))$, $\nabla u \in L^2(\Qp2)$, and in distribution they satisfy
    \begin{align*}
        \begin{cases}
            \pt u + u \cdot \grad u + \grad P = \La u + f & \inn \Qp2 \\
            \div u = 0 & \inn \Qp2 \\
            u = 0 & \onn \Qb2
        \end{cases}.
    \end{align*}
    If we denote 
    \begin{align*}
        c _0 := \int _{-4} ^0 \nor{\grad u (t)} _{L ^2 (\Bp2)} ^2 + \nor{f} _{L ^2 (\Bp2)} \dt,
    \end{align*}
    then we can bound the average-in-time vorticity on the boundary by
    \[
        \int _{\Bb1} \abs{
            \int _{-1} ^0 \omega (t, x', 0) \dt
        } \dx' \le C (c _0 + c _0 ^\frac1{2}).
    \]
\end{proposition}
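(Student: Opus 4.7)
The plan is to convert the boundary vorticity estimate for the nonlinear Navier--Stokes equation into a boundary regularity estimate for a Stokes system with a composite source, then use the inhomogeneous supercritical Sobolev embedding to pass from interior Stokes regularity to a boundary $L^1$ trace at the $L^1$ endpoint. First, I would treat $u$ as a solution of the evolutionary Stokes equation \eqref{eqn:stokes} on $\Qp2$ with source $\tilde f = f - \div(u \otimes u)$. The datum $f$ lies in $L^1_t L^2_x$ (contributing a term of order $c_0$), while the nonlinear part $\div(u \otimes u)$ can be bounded in a lower integrability space via Sobolev embedding in the tangential directions combined with interpolation against the energy bound $\int \nor{\grad u}_{L^2}^2 \le c_0$ (giving $u \otimes u$ in some $L^p_{t,x}$ with $p > 1$, controlled by $c_0$). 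This split of source integrabilities is designed to match the hypotheses of Corollary \ref{cor:mixed}, yielding a decomposition $u = u_1 + u_2$ on $\Qp1$ together with quantitative local bounds on $\partial_t u_i$, $\grad^2 u_i$, and $\grad P_i$ in distinct mixed $L^{p_i}_t L^{q_i}_x$ spaces.

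Next, I would convert these interior bounds into control of the time-averaged boundary vorticity. Since $u\equiv 0$ on $\Bb2$, the tangential boundary vorticity reduces to $\partial_d u_{\tan}\vert_{x_d=0}$. The key device is to apply Lemma \ref{lem:sob} along each $(t, x_d)$-slice with $x'\in\Bb1$ fixed: the two pieces of Stokes regularity for $u_1, u_2$ produce bounds on $\partial_t u$ and $\partial_d u$ in two distinct $L^p_t L^q_{x_d}$ integrability scales along the slice, so the inhomogeneous supercritical embedding applies to give an $L^\infty$ bound on $\omega$ along the slice (hence a pointwise-in-$x'$ bound for $\omega(t,x',0)$, uniform in $t$). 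Integrating the resulting pointwise bound over $x' \in \Bb1$ and then over $t \in (-1,0)$ gives the $L^1(\Bb1)$ bound for $\int_{-1}^0 \omega(t, x', 0)\, dt$. The linear contribution from $f$ and from the $L^2$ energy structure scales like $c_0^{1/2}$, while the quadratic contribution from the nonlinearity $u \otimes u$ scales like $c_0$.

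The main obstacle is the $L^1$-endpoint nature of the target estimate: classical trace theory does not give $\grad u \in L^1(\partial\Omega)$ from $\grad^2 u \in L^1(\Omega)$, and higher interior regularity $\grad^2 u \in L^1$ is in fact known to fail for the evolutionary Stokes system on bounded domains (as flagged in the discussion following Theorem \ref{thm:boundary-regularity}). Two technical tools remedy this. First, Corollary \ref{cor:mixed} allows one to separate the source into a high-integrability linear piece (admitting $L^2$-type Stokes estimates with $L^2$ boundary trace) and a low-integrability nonlinear piece, so we never need to run estimates uniformly at the $L^1$ endpoint. Second, Lemma \ref{lem:sob} is genuinely anisotropic and supercritical: it exploits the parabolic scaling of the Stokes equation in which $\partial_t$ and $\partial_d^2$ are balanced, and converts derivative bounds in different $L^p_t L^q_x$ spaces into an effective $L^\infty$ control along slices. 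This is exactly what is needed to produce an $L^1$ bound on the boundary trace after integrating over $\Bb1$. Combined with the fact that the time-averaged quantity $\int_{-1}^0 \omega\, dt$ avoids any pointwise-in-time dependence on individual snapshots $u(t_0)$, the combination of these two ingredients should yield the bound $C(c_0 + c_0^{1/2})$, with the two terms arising respectively from the quadratic and linear parts of the Stokes source.
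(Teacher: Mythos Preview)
There is a genuine gap in your plan: you intend to apply Corollary~\ref{cor:mixed} directly to $u$, but that corollary requires \emph{a priori} control of the pressure, namely $P\in L^{p_2}_t L^{q_2}_x(\Qp2)$. No such bound is part of the hypotheses of Proposition~\ref{thm:local}; the pressure enters only as a distribution, and in the boundary setting there is no local Calder\'on--Zygmund mechanism to recover $P$ from $u$ alone. The discussion after Theorem~\ref{thm:boundary-regularity} and the counterexample of Seregin cited there make precisely this point: pressure-driven time oscillations obstruct local boundary Stokes regularity for $u$ itself.

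The paper's remedy, which your proposal omits, is to pass to the time-averaged velocity $U(t,x)=\int_{t-1}^{t}u(s,x)\,ds$. After this smoothing, $\partial_t U = u(t)-u(t-1)$ is controlled in $L^2_t L^6_x$ directly from the energy, and then $\nabla Q = \Delta U + F - \partial_t U$ together with Ne\v cas' theorem yields $Q\in L^2_{t,x}$. Only at that point are the hypotheses of Corollary~\ref{cor:mixed} met, and the decomposition $U=U_1+U_2$ becomes available. A second, smaller discrepancy: the paper does not apply Lemma~\ref{lem:sob} slice-by-slice at fixed $x'$, but first integrates in $x'$ to form the scalar function $\Omega(t,x_d)=\int_{\Bb1}|\nabla U(t,x',x_d)|\,dx'$ and applies Lemma~\ref{lem:sob} to $\Omega$. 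This matters because the Stokes estimates give $L^p_t L^q_x$ bounds in the full spatial variable, not bounds that are uniform in $x'$ on $(t,x_d)$-slices. Your final paragraph gestures at the time-averaging issue but attributes it to avoiding snapshots of $u$ rather than to manufacturing a usable pressure bound; the latter is the actual obstruction.
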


\begin{proof}
    For $t \in (-3, 0)$, we define 
    \[
        U (t, x) = \int _{t - 1} ^t u (s, x) \d s.
    \]
    As explained in the introduction, this is needed to tame the time oscillation of the local pressure, which comes from $\pt u$. This allows us to apply the local Stokes estimate at the boundary. 
    Denote $\rho (t) = \ind{[0,1]} (t)$, then $U = u *_t \rho$, where $* _t$ stands for convolution in $t$ variable only. If we denote $Q = P *_t \rho$, and $F = (f - u \cdot \grad u) *_t \rho$, then $U$ satisfies the following system:
    \begin{align*}
        \begin{cases}
            \pt U + \grad Q = \La U + F & \inn (-3, 0) \times \Bp2 \\
            U = 0 & \onn \set{x _d = 0}
        \end{cases}.
    \end{align*}
    The proof of this theorem can be divided into three steps: the first two estimate terms in this system, and the last step uses the Stokes estimate and the Sobolev embedding.
    
    \paragraph*{\bf Step 1. Estimates on $u, U, \pt U, \La U$.}

    We have via Sobolev embedding and using that $u = 0$ on $\Qb2$ that 
    \begin{align}
        \label{eqn:nor-u-2}
        \nor{
            u
        } _{\Lt {2} \Lx {6}\pth{\Qp2}} \le C c _0 ^{\frac1{2}}
    \end{align}
    for both dimension 2 and 3.
    Since $\pt U (t, x) = u (t, x) - u (t - 1, x)$, we have 
    \begin{align*}
        \nor{\pt U} _{\Lt {2} \Lx {6} ((-3, 0) \times \Bp2)} \le C c _0 ^{\frac1{2}},
    \end{align*}
    On the other hand, the Laplacian of $U$ is bounded by
    \begin{align*}
        \nor {\La U} _{\Lt \infty H _x ^{-1} ((-3, 0) \times \Bp2)} 
        \le 
        C \nor {\La u} _{\Lt 2 H _x ^{-1} (\Qp2)} 
        \le 
        C \nor {\grad u} _{L ^2 (\Qp2)} 
        \le 
        C c _0 ^\frac12.
    \end{align*}

    \paragraph*{\bf Step 2. Estimates on $F$ and $Q$.}

    Applying H\"older's inequality, by \eqref{eqn:nor-u-2} we have
    \begin{align*}
        \nor{u \cdot \grad u} _{\Lt 1 \Lx {
        \frac32} (\Qp2)} \le C c _0.
    \end{align*}
    Also by \eqref{eqn:nor-u-2} we have by embedding that
    \begin{align*}
        \nor{\div(u \tensor u)} _{\Lt 1 W _x ^{-1, 3}(\Qp2)} \le C c _0.
    \end{align*}
    for both dimension 2 and 3.
    By convolution, we bound $F$ by 
    \begin{align*}
        \nor{F} _{\Lt \infty \Lx {\frac32} ((-3, 0) \times \Bp2)}, \nor{F} _{\Lt \infty W _x ^{-1, 3} ((-3, 0) \times \Bp2)} \le C c _0.
    \end{align*}
    Next we estimate $Q$. Using $\grad Q = \La U + F - \pt U$ we have 
    \begin{align*}
        \nor {\grad Q} _{\Lt {2} H _x ^{-1}} \le C c _0 + 
        C c _0 +
        C c _0 ^\frac12 \le C (c _0 + c _0 ^\frac1{2}).
    \end{align*}
    Without loss of generality we assume that the average of $Q$ is zero at every $t$. Then by Ne\v cas theorem (see \cite{Seregin2014}, Section {1.4}), 
    \begin{align*}
        \nor {Q} _{L ^2 _{t, x}} \le C (c _0 + c _0 ^\frac1{2}).
    \end{align*}

    \paragraph*{\bf Step 3. Stokes estimates and Trace theorem.}
    
    By Corollary \ref{cor:mixed}, we can split $U = U _1 + U _2$, where for any $p < \infty$, we have
    \begin{align*}
        \nor{
            \abs{\pt U _1} + \abs{\grad ^2 U _1}
        } _{\Lt p \Lx {\frac32} (\Qp 1)} 
        + 
        \nor{
            \abs{\pt U _2} + \abs{\grad ^2 U _2}
        } _{\Lt 2 \Lx p (\Qp 1)}
        \le 
        C (c _0 + c _0 ^\frac12).
    \end{align*}
    Denote $\Omega (t, x _d) := \int _{\Bb 1} \abs{\grad U (t, x', x _d)} \d x'$, then $\partial _{x _d} \Omega$ is bounded in 
    \begin{align*}
        \partial _{x _d} \Omega \in \Lt 2 L _{x _d} ^p + \Lt p L _{x _d} ^{\frac32} ((-1, 0) \times (0, 1)).
    \end{align*}
    for any $p < \infty$. Note that 
    \begin{align*}
        \pt \Omega = \int \abs{\grad u} \d x' \in L ^2 _{t, x _d} ((-1, 0) \times (0, 1)).
    \end{align*}
    Since by interpolation, $\Lt 1 L _{x _d} ^\infty \cap \Lt \infty L _{x _d} ^1 \subset L ^2 _{t, x _d}$, by duality $\pt \Omega$ is bounded in $L ^2 _{t, x _d} \subset \Lt 1 L _{x _d} ^\infty + \Lt \infty L _{x _d} ^1$. Similarly, $\partial _{x _d} \Omega$ is bounded in 
    \begin{align*}
        \partial _{x _d} \Omega \in \Lt 2 L _{x _d} ^p + \Lt p L _{x _d} ^{\frac32} ((-1, 0) \times (0, 1)) \subset \Lt r L _{x _d} ^\infty + \Lt \infty L _{x _d} ^r ((-1, 0) \times (0, 1))
    \end{align*}
    for any $p > 3$ and $r > 1$ sufficiently small. 
    Now we can use Lemma \ref{lem:sob} to show $\Omega$ is continuous up to the boundary with oscillation bounded by
    \begin{align*}
        \nor{\Omega} _{\osc ((-1, 0) \times (0, 1))} \le C (c _0 + c _0 ^\frac12).
    \end{align*}
    Since the average of $\Omega$ is also bounded as 
    \begin{align*}
        \int \Omega \d {x _d} \dt = \int _{\Qp1} \abs{\grad u} \dx \dt \le C c _0 ^\half,
    \end{align*}
    we have $\Omega$ is bounded in $L ^\infty$, in particular
    \begin{align*}
        \int _{\Bb1} \abs{
            \int _{-1} ^0 \grad u (t, x', 0) \dt
        } \dx' = \Omega (0, 0) \le C_0 (c _0 + c _0 ^\half).
    \end{align*}
    This concludes the proof of this proposition.
\end{proof}

The proof of Theorem  \ref{thm:boundary-regularity} relies on a domain decomposition inspired by the Calder\'on–Zygmund decomposition introduced for the study of singular integrals (see \cite{Stein1993}).
We first define the parabolic dyadic decomposition.

\begin{definition}[Parabolic Dyadic Decomposition]
    \label{def:parabolic-dyadic-decomposition}
    Let $L > 0$, and let $\Omega$ be a periodic channel of period $W$ and height $H$. We define the parabolic dyadic decomposition of $(0, L) \times \Omega$ as below. Denote
    \begin{align}
        \label{eqn:short-in-time}
        R _0 = \min \set{
            \sqrt L, \frac W2, \frac H2
        }.
    \end{align}
    Then we can find positive integer $k _L, k _W, k _H$, such that 
    \begin{align*}
        L = 4 ^{k _L} L _0, 
        \qquad 
        W = 2 \cdot 2 ^{k _W} W _0, 
        \qquad 
        H = 2 \cdot 2 ^{k _H} H _0,
    \end{align*}
    where $L _0, W _0, H _0$ satisfy 
    \begin{align*}
        R _0 \le \sqrt{L _0}, W _0, H _0 \le 2 R _0.
    \end{align*}
    First, we evenly divide $(0, L) \times \Omega$ into $4 ^{k _L} \cdot 2 ^{k _W + 1} \cdot 2 ^{k _H + 1}$ cubes of length $L _0$, width $W _0$ and height $H _0$, and denote $\mQ 0$ to be this set of cubes. For each $Q \in \mQ 0$, we can divide $Q$ into $4 \times 2 ^d$ subcubes with length $L _0 / 4$, width $W _0 / 2$, and height $H _0 / 2$. This set is denoted by $\mQ 1$. For each cube in $\mQ 1$, we can continue to dissect it into $4 \times 2 ^d$ smaller cubes with a quarter the length, half the width, and half the height. We denote the resulted family by $\mQ 2$. We proceed indefinitely and define $\mathcal Q = \cup _{k \in \mathbb N} \mQ k$ to be the parabolic dyadic decomposition of $(0, L) \times \Omega$. 
\end{definition}

\begin{proof}[Proof of Theorem \ref{thm:boundary-regularity}.]
    The partition of $(0, T) \times \Omega$ is constructed as follows. Among the parabolic dyadic decomposition of $(0, T) \times \Omega$, we first select a family of disjoint cubes, denoted by $\mQo$, according to the following rule:
    \begin{enumerate}[\bfseries a)]
        \item \label{cond1} For any integer $k \ge 1$, in $\set{4 ^{-k} L _0 \le t \le 4 ^{-k+1} L _0}$, we pick every parabolic cube in $\mQ k$, which are cubes of size $4 ^{-k} L _0 \times 2 ^{-k} W _0 \times 2 ^{-k} H _0$.
        \item \label{cond2} In $\set{t \ge L _0}$, we pick every parabolic cube in $\mQ 0$.
    \end{enumerate}
    The selection of these cubes ensures enough gap from the initial time $t = 0$, which allows the local parabolic regularization to apply around these cubes.
    \begin{figure}[bp]
        \centering
        \includegraphics{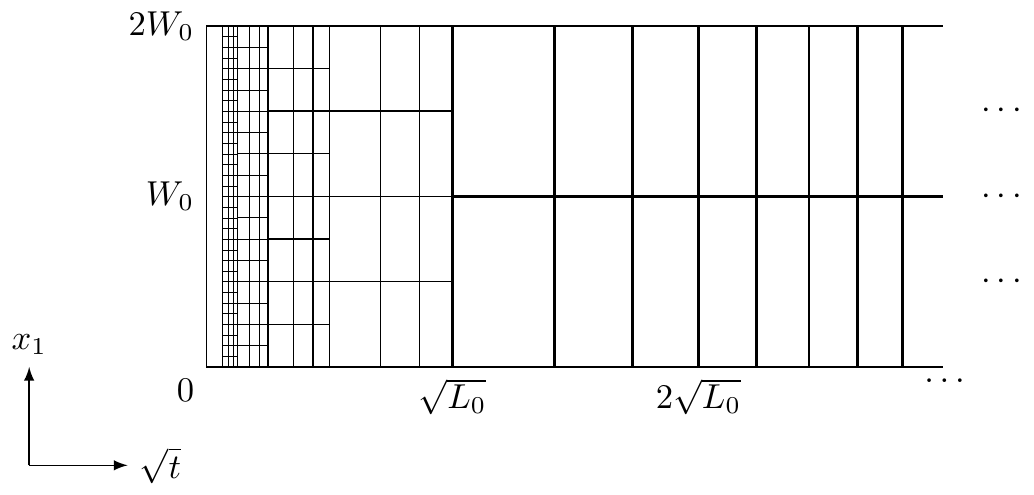}
        
        \caption{Initial Partition $\mQo$ of a Long Channel $(0, L) \times \Omega$}
        \label{fig:initial-decomposition-2}
    \end{figure}
    \begin{figure}[tbp]
        \centering
        \includegraphics{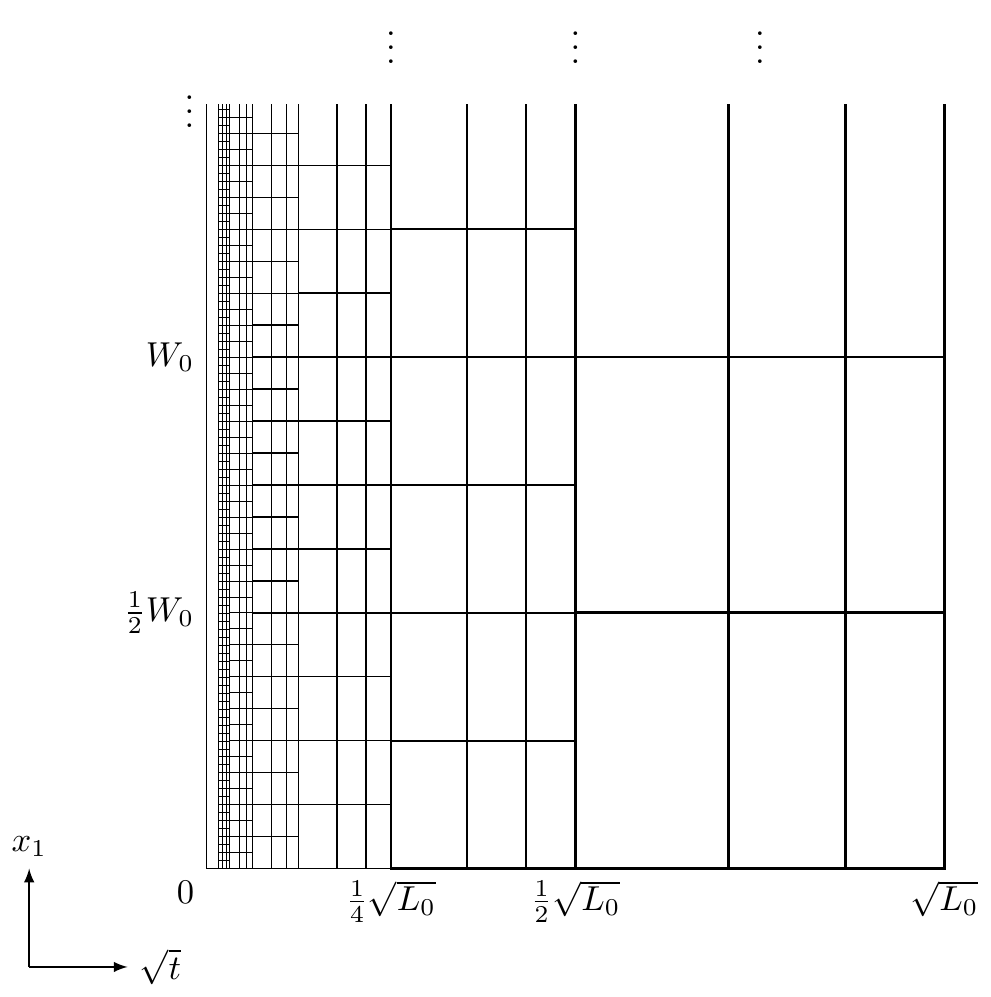}
        \caption{Initial Partition $\mQo$ of a Wide Channel $(0, L) \times \Omega$}
        \label{fig:initial-decomposition}
    \end{figure}
    As shown in Figure \ref{fig:initial-decomposition-2} and Figure \ref{fig:initial-decomposition}, they form a partition of $(0, T) \times \Omega$. Figure \ref{fig:initial-decomposition-2} corresponds to when $R _0 = \mins{\frac W2, \frac H2} < \sqrt {L _0}$, and figure \ref{fig:initial-decomposition} corresponds to when $R _0 = \sqrt {L _0} = \sqrt T$, in which case \textbf{\ref{cond2})} does not happen.

    We are interested in cubes that touch the boundary, i.e., having zero distance from $\partial \Omega$. We call these cubes the ``boundary cubes''. 
    Given a boundary cube $Q \in \mQ k$ that meets the boundary $\{x _d = 0\}$, we denote its length as $l = 4 ^{-k} L _0$, width as $w = 2 ^{-k} W _0$, and height as $h = 2 ^{-k} H _0$. Thus for some $(t, x', 0) \in (0, T) \times \pOmega$, $Q$ can be expressed as
    \begin{align*}
        Q &= (t - l, t) \times \bar B _{w / 2} (x') \times (0, h), \qquad \bar B _{w / 2} (x') = \set{y': \nor{x' - y'} _{\ell ^\infty} < w / 2}
    \end{align*}
    Let us denote 
    \[
        2 Q = (t - 2 l, t) \times \bar B _w (x') \times (0, 2 h).
    \]
    Similar definition applies to boundary cubes that touch $\set{x _d = H}$. A boundary cube $Q \in \mQ k$ is said to be suitable if it satisfies 
    \begin{align}
        \label{eqn:suitable}
        \tag{S}
        \fint _{2 Q} \abs{\grad u} ^2 \dx \dt \le c _0 (2 ^{-k} R _0) ^{-4}
    \end{align}
    for some $c _0$ to be determined.

    Starting from $\mQo$, we decompose the boundary cubes based on the following rules. 
    For each boundary cube in the initial partition $\mQo$ that is not suitable, we dyadically dissect it into $4 \times 2 ^d$ smaller parabolic cubes. For each smaller boundary cube, we continue to dissect it until the suitability condition \eqref{eqn:suitable} is satisfied. This process will finish in finitely many steps almost everywhere because $\grad u$ is bounded in $L ^2$ for any Leray-Hopf solutions, so all sufficiently small cubes are suitable.

    The final partition will contain a subcollection of dyadic boundary cubes $\set{Q ^i} _{i \in \Lambda} \subset \mathcal Q$ that are suitable, mutually disjoint, and verify $\overline{(0, T) \times \pOmega} = \overline {\bigcup _i \Qb i}$. For each boundary cube $Q ^i \in \mQ k$ centered at $(t \pp i, x \pp i)$, we denote its length as $l _i = 4 ^{-k} L _0$, width as $w _i = 2 ^{-k} W _0$, and height as $h _i = 2 ^{-k} H _0$. Thus $Q ^i$ can be expressed as
    \begin{align*}
        Q ^i &= (t \pp i - l _i, t \pp i) \times \Bbi \times (0, h _i), 
        & 
        \Bbi &= \bar B _{w _i / 2} (x \pp i).
    \end{align*}
    It is easy to see from our construction that $2 Q ^i \subset (0, T) \times \Omega$. Denote $r _i = 2 ^{-k} R _0$, then from Definition \ref{def:parabolic-dyadic-decomposition} we have 
    \[
        r _i \le \sqrt{l _i}, w _i, h _i \le 2 r _i.
    \]
    Suitability \eqref{eqn:suitable} of $Q ^i$ implies 
    \begin{align*}
        \fint _{2 Q ^i} \abs{\grad u} ^2 \dx \dt \le c _0 r _i ^{-4}.
    \end{align*}
    Using the canonical scaling of the Navier-Stokes equation $u _r (t, x) := r u (r ^2 t, r x)$, Proposition \ref{thm:local} implies that 
    \[
        \tomega \vert_{\Qbi} = \fint _{\Bbi} \abs{\fint _{t \pp i - l _i} ^{t \pp i} \omega (t, x', 0) \dx'} \dt \le C (c _0 + c _0 ^\frac12) r _i ^{-2} =: c _1 r _i ^{-2}.
    \]
    We can use this Proposition because $Q ^i$ is comparable to a parabolic cube. 

    Now we separate three cases:
    \begin{enumerate}
        \item If $Q ^i \in \mQo \cap \mQ k$ with $k \ge 1$, then by condition \textbf{\ref{cond1})}, any $(t, x) \in Q ^i$ satisfies $t < 4 l _i \le 16 r _i ^2$, thus in $\Qbi$ we have 
        \begin{align*}
            \tomega \le \frac {16 c _1}t.
        \end{align*}
        We can select $c _0$ small enough such that $16 c _1 = 1$.

        \item If $Q ^i \in \mQo \cap \mQ 0$, then by condition \textbf{\ref{cond2})}, any $(t, x) \in Q ^i$ satisfies $L _0 = l _i < t < T$, $r _i = R _0$, thus in $\Qbi$ we have 
        \begin{align*}
            \tomega \le c _1 R _0 ^{-2} = \frac1{16} R _0 ^{-2}, 
        \end{align*}
        Note that this case only happen when $T > L _0 \ge R _0 ^2$, so in fact we know $R _0 = \min \set{W, H} / 2$, thus $\tomega \le \mins{W, H} ^{-2}$.
        
        \item If $Q ^i \notin \mQo$ is not one of the initial cubes in the grid, then its antecedent cube $\tilde Q ^i$ is also a boundary cube and is not suitable, so 
        \begin{align*}
            \fint _{2 \tilde Q ^i} \abs{\grad u} ^2 \dx \dt > c _0 (2 r _i) ^{-4},
        \end{align*}
        By the definition of the maximal function $\mm$ (recall Definition \ref{def:maximal}), this implies
        \begin{align*}
            \min _{Q ^i} \mm (\abs{\grad u} ^2) \ge c _2 r _i ^{-4}.
        \end{align*}
        for some $c _2$ comparable to $c _0$.
    \end{enumerate}

    Combining these three cases, for any $\rstar = 2 ^{l} R _0$ with $l \in \mathbb Z$, we have 
    \begin{align*}
        &
        \hspace{-3em}
        \set{
            (t, x') \in (0, T) \times \partial \Omega: \tomega > \maxs{
                c _1 \rstar ^{-2}, 
                t \inv, 
                W ^{-2}, 
                H ^{-2}
            }
        } 
        \\
        & 
        \subset 
        \bigcup _{i} \set{
            \Qbi : r _i < \rstar
        }
        \subset 
        \bigcup _{i} \bigcup _{k = 1} ^\infty \set{
            \Qbi : r _i = 2 ^{-k} \rstar
        }.
    \end{align*}
    Therefore the measure of the upper level set is controlled by the total measure of these suitable boundary cubes, that is
    \begin{align*}
        \abset{
            \tomega > \maxs{
                c _1 \rstar ^{-2}, 
                t \inv, 
                W ^{-2}, 
                H ^{-2}
            }
        } 
        &\le 
        \sum _{k = 1} ^\infty \sum _{r _i = 2 ^{-k} \rstar} \abs{\Qbi} 
        \\
        &\le 
        \sum _{k = 1} ^\infty \frac{2 ^k}{\rstar} \sum _{r _i = 2 ^{-k} \rstar} \abs{Q ^i} .
    \end{align*}
    Note that 
    \begin{align*}
        \bigcup _{i} \set{
            Q ^i : r _i = 2 ^{-k} \rstar
        } \subset \set{
            \mm (\abs{\grad u} ^2) \ge c _2 (2 ^{-k} \rstar) ^{-4}
        },
    \end{align*}
    which implies that
    \begin{align*}
        &\hspace{-2em}
        \abset{
            \tomega > \maxs{
                c _1 \rstar ^{-2}, 
                t \inv, 
                W ^{-2}, 
                H ^{-2}
            }
        } 
        \\
        &\le 
        \sum _{k = 1} ^\infty \frac{2 ^k}{\rstar} \abset{
            \mm (\abs{\grad u} ^2) \ge c _2 (2 ^{-k} \rstar) ^{-4}
        } 
        \\
        &\lesssim 
        \sum _{k = 1} ^\infty \frac{2 ^k}{\rstar} \nor{
            \mm (\abs{\grad u} ^2)
        } _{L _{\loc} ^{1,\infty} ((0, T) \times \Omega)} (2 ^{-k} \rstar) ^{4} 
        \\
        &\lesssim 
        \nmLWT1{
            \abs{\grad u} ^2
        } \rstar ^{3}.
    \end{align*}
    By the definition of Lorentz space, this shows
    \begin{align*}
        \nmLpWT{\frac32, \infty}{\tomega \indtomega} ^{\frac32} \lesssim \nmLWT2{\grad u} ^2.
    \end{align*}
    This completes the proof of the theorem.
\end{proof}

\section{Proof of the Main Result}
\label{sec:main}

This section is dedicated to the proof of Theorem \ref{thm:main}.
Theorem \ref{thm:boundary-regularity} provides a control on the large part of $\tomega$, but it leaves a remainder in the region $\tomega < \frac1t$, whose integral has a logarithmic singularity at $t = 0$. To avoid this singularity, we should apply Theorem \ref{thm:boundary-regularity} only away from $t = 0$, and near $t = 0$ we should adopt a different strategy. 

Let $\unup$ be a shear solution to \eqref{eqn:nse-nu} with initial value $\unup \at{t = 0} = \ub$ (the pressure term is 0). Then $\unup$ can be written as
\begin{align*}
    \unup (t, x) = 
    \begin{cases}
        \Unup (t, x _2) e _1 & d = 2 \\
        {\Unup} _1 (t, x _3) e _1 + {\Unup} _2 (t, x _3) e _2 & d = 3
    \end{cases},
\end{align*}
where $\Unup$ solves the Prandtl layer equation, 
\begin{align}
    \label{eqn:prandtl-nu}
    \tag{$\text{Pr} _\nu$}
    \begin{cases}
        \pt \Unup = \nu \partial _{x _d x _d} \Unu & \inn (0, T) \times (0, H) \\
        \Unup = 0 & \onn (0, T) \times \{0, H\} \\
        \Unup = \bar U & \att t = 0
    \end{cases}.
\end{align}

We choose a small positive number $T _\nu < T$ to be determined later, and separate the evolution into two parts: in a short period $(0, T _\nu)$, we compare $\unu$ and $\ub$ with the Prandtl layer $\unup$, while in the remaining time $(T _\nu, T)$, we compare $\unu$ and $\ub$ using the boundary vorticity.

Before we proceed, let us remark on a few useful computations and estimates that will be used repeatedly in this section. If $v, w$ are two divergence-free vector fields in $(0, T) \times \Omega$ satisfying the no-slip boundary condition $v = 0$ and the no-flux boundary condition $w \cdot n = 0$ on $\pOmega$ respectively, then we have the following three estimates:
\begin{align}
    \label{eqn:tools-from}
    (v - w, v \cdot \grad v - w \cdot \grad w) 
    &= (v - w, v \cdot \grad (v - w)) + (v - w, (v - w) \cdot \grad w) \\
    \notag
    &\le \nmL\infty{\grad w} \nmL2{v - w} ^2, \\
    (v - w, \grad P) 
    &= \int _{\pOmega} P (v - w) \cdot n \d S = 0, \\
    \label{eqn:tools-to}
    (v - w, \La v) &= - \nmLW2{\grad v} ^2 + (\grad w, \grad v) - \int _{\pOmega} w \cdot \partial _n v \d S \\
    \notag
    &\le - \half \nmL2{\grad v} ^2 + \half \nmL2{\grad w} - \int _{\pOmega} J[w] \cdot \curl v \d S .
\end{align}
Here $J[w]$ is a rotation of $w$ and $\curl v$ is the vorticity of $v$ defined by 
\begin{align*}
    J[w] := \begin{cases}
        n ^\perp \cdot w & d = 2 \\
        n \times w & d = 3
    \end{cases}, 
    \qquad 
    \curl v := \begin{cases}
        \grad ^\perp \cdot v & d = 2 \\
        \grad \times v & d = 3
    \end{cases},
\end{align*}
where $n ^\perp$ is the rotation of the normal vector counterclockwise by a right angle, and $\grad ^\perp = (-\partial _{x _2}, \partial _{x _1})$. Moreover, note that $w \cdot \grad w = 0$ in \eqref{eqn:tools-from} when $w$ is a shear flow.

\subsection{Prandtl Timespan} 

To compute the evolution of $\unu - \unup$, first we subtract their equations and obtain
\begin{align*}
    \pt (\unu - \unup) + \unu \cdot \grad \unu + \grad \Pnu = \nu \La (\unu - \unup).
\end{align*}
The evolution of $\unu - \unup$ can be computed using \eqref{eqn:tools-from}--\eqref{eqn:tools-to} as
\begin{align*}
    \half \ddt \nmL2{\unu - \unup} ^2 + \nu \nmL2{\grad (\unu - \unup)} ^2 &\le -(\unu - \unup, \unu \cdot \grad \unu) \\
    &\le \nmL\infty{\grad \unup} \nmL2{\unu - \unup} ^2 .
\end{align*}
By Lemma \ref{lem:heat}, the Lipschitz norm of the Prandtl layer at time $t$ is
\begin{align*}
    \nmL\infty{\grad \unup}(t) = \nmL\infty{\grad \Unup} \le \half (\nu t) ^{-\frac34} \pthf{E}{\vert\pOmega\vert} ^\half .
\end{align*}
Integrating in time, we have
\begin{align}
    \label{eqn:L1inf}
    2 \nor {\grad \unup} _{L ^1 (0,T_\nu; L ^\infty (\Omega))} \le \int _0 ^{T _\nu} (\nu t) ^{-\frac34} \pthf{E}{\vert\pOmega\vert} ^\half \dt \le \log 2
\end{align}
if we choose $T _\nu$ small enough such that 
\begin{align}
    \label{eqn:def-T*}
    T _\nu \le T _* := \pthf{\log 2}{4} ^4 E ^{-2} \vert\pOmega\vert ^{2} \nu ^3.
\end{align}
By Gr\"onwall's inequality, we have for any $0 < t < T _\nu$, 
\begin{align}
    \label{eqn:0-Tnu-1}
    \half\nmLW2{\unu - \unup} ^2 (t) + \nu  \nmLWt2{\grad (\unu - \unup)} ^2 
    \le \nmLW2{\unu - \ub} ^2 (0).
\end{align}

The evolution of $\unup - \ub$ can be computed using \eqref{eqn:tools-to} as
\begin{align*}
    \half \ddt \nmLW2{\unup - \ub} ^2 &= (\unup - \ub, \pt \unup) 
    = (\unup - \ub, \nu \La \unup) \\
    &\le  -\frac\nu2 \nmL2{\grad \unup} ^2 + \frac\nu2 \nmL2{\grad \ub} ^2 - \nu \int _{\pOmega} \ub \cdot \partial _n \unup \dx'
\end{align*}
where $\nmL2{\grad \ub} ^2 \le G ^2 \vert\Omega\vert$ and
\begin{align*}
    \abs{
        \int _{\pOmega} \ub \cdot \partial _n \unup \dx'
    } \le \nmLpW\infty{\grad \unup} \nmLpW\infty{\ub} \vert\pOmega\vert.
\end{align*}
Integration in time gives for any $0 < t < T _\nu$, we have 
\begin{align*}
    & \half \nmLW2{\unup - \ub} ^2 (t) + \frac\nu2 \nmLWt2 {\grad \unup} ^2 \\
    & \qquad \le \frac\nu2 G ^2 \vert\Omega\vert t + A \nu \vert\pOmega\vert \nor {\grad \unup} _{L ^1 (0, T_\nu; L ^\infty (\Omega))}  \\
    & \qquad \le \frac\nu2 G ^2 \vert\Omega\vert t + \half A ^2 \abs{\Omega} \Re \inv
\end{align*}
where the last inequality used \eqref{eqn:L1inf}.

Combined with \eqref{eqn:0-Tnu-1}, we have for any $0 < t \le T _\nu$,
\begin{equation}
    \label{eqn:0-Tnu}
    \begin{aligned}
        &
        \half \nmLW2{\unu - \ub} ^2 (t) + \frac\nu2\nmLWt2{\grad \unu} ^2 
        \\
        & \qquad \le 2 \nmLW2{\unu - \ub} ^2 (0) + \nu G ^2 \vert\Omega\vert t + A ^2 \abs{\Omega} \Re \inv.
    \end{aligned}
\end{equation}

\subsection{Main Timespan}

The evolution of $\unu - \ub$ can be computed using \eqref{eqn:tools-from}--\eqref{eqn:tools-to} as
\begin{align*}
    \half \ddt \nmL2{\unu - \ub} ^2 &= (\unu - \ub, \pt \unu) \\
    &\le -(\unu - \ub, \unu \cdot \grad \unu) - (\unu - \ub, \grad \Pnu) + \nu (\unu - \ub, \La \unu) \\
    &\le \nmL\infty{\grad \ub} \nmL2{\unu - \ub} ^2 - \half \nu \nmL2{\grad \unu} ^2 + \half \nu \nmL2{\grad \ub} ^2 \\
    &\qquad - \int _{\pOmega} J[\ub] \cdot (\nu \omega ^\nu) \d x'.
\end{align*}
Since $\ub$ is a constant on each connecting component of $\pOmega$, by integrating from $T _\nu$ to $T$, we have 
\begin{align*}
    &\half \nmLW2{\unu - \ub} ^2 (T) + \frac\nu2 \nor{\grad \unu} _{L ^2 ((T _\nu, T) \times \Omega)} ^2 \\
    &\qquad \le \half \nmLW2{\unu - \ub} ^2 (T _\nu) + G \int _{T _\nu} ^T \nmLW2{\unu - \ub} ^2 (t) \dt + \frac\nu2 G ^2 (T - T _\nu) \vert\Omega\vert \\
    &\qquad \qquad + A \pth{
        \abs{ 
            \int _{T _\nu} ^T \intset{x _d = 0} \nu \omega ^\nu \dx' \d t
        } + 
        \abs{ 
            \int _{T _\nu} ^T \intset{x _d = H} \nu \omega ^\nu \dx' \d t
        } 
    }.
\end{align*}
Adding \eqref{eqn:0-Tnu} at $t = T _\nu$, we have for any $T > T _\nu$ that 
\begin{equation}
    \label{eqn:combined}
    \begin{aligned}
        &\half \nmLW2{\unu - \ub} ^2 (T) + \frac\nu2 \nmLWT2{\grad \unu} ^2 \\
        & \le 2 \nmLW2{\unu - \ub} ^2 (0) + G \int _{T _\nu} ^T \nmLW2{\unu - \ub} ^2 (t) \dt 
        + \nu G ^2 T \vert\Omega\vert + A ^2 \abs{\Omega} \Re \inv \\
        &\qquad + A \pth{
            \abs{ 
                \int _{T _\nu} ^T \intset{x _d = 0} \nu \omega ^\nu \dx' \d t
            } + 
            \abs{ 
                \int _{T _\nu} ^T \intset{x _d = H} \nu \omega ^\nu \dx' \d t
            } 
        }.
    \end{aligned}
\end{equation}

\subsection{Proof of Theorem \ref{thm:main}}

We first note that Theorem \ref{thm:main} is only interesting when the initial kinetic energy $\nmL2{\unu (0)}$ and $\nmL2{\ub}$ are comparable.

\begin{lemma}
    Let $\ub \in L ^2 (\Omega)$, and let $\unu$ be a Leray-Hopf solution to \eqref{eqn:nse-nu}, so the energy inequality holds: 
    \begin{align*}
        \half \nmLW2{\unu (T)} ^2 + \nu\nmLWT2{\grad \unu} ^2 \le \half \nmLW2{\unu (0)}.
    \end{align*}
    For any $C' > 1$, there exists $C > 0$ such that if $\nmLW2{\unu (0)} > C \nmLW2{\ub}$ or $\nmLW2{\ub} > C \nmLW2{\unu (0)}$, then
    \begin{align*}
        \nmLW2{\unu (T) - \ub} ^2 + 2 \nu \nmLWT2{\grad \unu} ^2 &\le C' \nmLW2{\unu (0) - \ub} ^2. 
    \end{align*}
\end{lemma}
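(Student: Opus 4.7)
The plan is to reduce everything to the energy inequality and the reverse triangle inequality. Set $a := \nmLW2{\unu(0)}$ and $b := \nmLW2{\ub}$. The Leray-Hopf energy inequality gives
\[
    \nmLW2{\unu(T)}^2 + 2\nu \nmLWT2{\grad \unu}^2 \le a^2.
\]
I would then expand the $L^2$ distance of $\unu(T)$ from $\ub$ via the inner product,
\[
    \nmLW2{\unu(T) - \ub}^2 = \nmLW2{\unu(T)}^2 - 2(\unu(T), \ub) + b^2,
\]
and add this to the energy inequality so that the $\nmLW2{\unu(T)}^2$ terms cancel. Using Cauchy-Schwarz $|(\unu(T), \ub)| \le \nmLW2{\unu(T)} \cdot b \le ab$, I obtain the clean upper bound
\[
    \nmLW2{\unu(T) - \ub}^2 + 2\nu \nmLWT2{\grad \unu}^2 \le a^2 + 2ab + b^2 = (a + b)^2.
\]

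For the matching lower bound, the reverse triangle inequality gives $\nmLW2{\unu(0) - \ub} \ge |a - b|$. When the hypothesis $a > C b$ or $b > C a$ holds, the two norms are separated by a definite multiplicative gap, so
\[
    |a - b| \ge \frac{C - 1}{C + 1} (a + b).
\]
Combining with the previous display yields
\[
    \nmLW2{\unu(T) - \ub}^2 + 2\nu \nmLWT2{\grad \unu}^2 \le \pthf{C + 1}{C - 1}^2 \nmLW2{\unu(0) - \ub}^2.
\]
Given any $C' > 1$, the constant $\pth{\frac{C+1}{C-1}}^2$ can be made at most $C'$ by taking $C$ sufficiently large (for instance $C = (\sqrt{C'} + 1)/(\sqrt{C'} - 1)$), which finishes the proof.

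I do not anticipate any real obstacle: the statement is essentially a bookkeeping observation saying that once $a$ and $b$ are not of comparable size, the initial $L^2$ gap already absorbs both the terminal gap and the total dissipation, so none of the deep machinery (Prandtl layer, boundary vorticity control) from the rest of the paper is needed. The only care required is to keep track of the factor of 2 between $\nu \nmLWT2{\grad \unu}^2$ in the energy inequality and the $2\nu \nmLWT2{\grad \unu}^2$ appearing in the conclusion, which is precisely why the expansion above was designed to let the dissipation be bounded by $a^2 - \nmLW2{\unu(T)}^2$ rather than $a^2$.
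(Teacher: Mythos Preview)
Your proof is correct and, in fact, cleaner than the paper's own. Both arguments rest only on the energy inequality and elementary norm inequalities, so the underlying idea is the same: once $a=\nmLW2{\unu(0)}$ and $b=\nmLW2{\ub}$ differ by a large ratio, $|a-b|$ and $a+b$ are comparable, and the energy inequality alone closes the estimate. The paper treats the two cases $a>Cb$ and $b>Ca$ separately, applying a parameter Young inequality $\|x-y\|^2\le (1+1/C)(\|x\|^2+C\|y\|^2)$ twice (once to split $\unu(T)-\ub$, once to split $\unu(0)$ or $\ub$ from $\unu(0)-\ub$) and then tracking the constants. Your route is more symmetric: you go directly to the single bound $\nmLW2{\unu(T)-\ub}^2+2\nu\nmLWT2{\grad\unu}^2\le (a+b)^2$ via Cauchy--Schwarz, and then invoke the monotonicity of $(t-1)/(t+1)$ to compare $(a+b)$ with $|a-b|$ in one stroke. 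This handles both cases at once and even yields an explicit choice $C=(\sqrt{C'}+1)/(\sqrt{C'}-1)$, which the paper does not give. The only place to be slightly more explicit in your write-up is where you say ``add this to the energy inequality so that the $\nmLW2{\unu(T)}^2$ terms cancel'': what you actually do is substitute $\nmLW2{\unu(T)}^2+2\nu\nmLWT2{\grad\unu}^2\le a^2$ into the expansion, not literally add the two displays.
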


\begin{proof}
    If $\nmL2{\unu (0)} > C \nmLW2{\ub}$, by energy inequality we can bound
    \begin{align*}
        \nmLW2{\unu (T) - \ub} ^2 &\le \pth{1 + \frac1C} \pth{
            \nmLW2{\unu (T)} ^2
            + C \nmLW2{\ub} ^2
        } \\
        &= \pth{1 + \frac1C} \nmLW2{\unu (0)} ^2 
        - 2 \pth{1 + \frac1C} \nu \nmLWT2{\grad \unu} ^2
        \\
        &\qquad + (C + 1) \nmLW2{\ub} ^2 \\
        &\le \pth{1 + \frac1C} ^2 \pth{
            \nmLW2{\unu (0) - \ub} ^2 + C \nmLW2{\ub} ^2
        } \\
        &\qquad 
        - 2 \nu \nmLWT2{\grad \unu} ^2
        + (C + 1) \nmLW2{\ub} ^2.
    \end{align*}
    Since $\nmL2{\unu (0)} > C \nmL2{\ub}$ implies $\nmL2{\ub} < \frac1{C - 1} \nmL2{\unu (0) - \ub}$, we conclude
    \begin{align*}
        \nmLW2{\unu (T) - \ub} ^2 + 2 \nu \nmLWT2{\grad \unu} ^2 &\le C' \nmLW2{\unu (0) - \ub} ^2
    \end{align*}
    for some $C' \to 1 ^+$ as $C \to \infty$.
    If $\nmL2{\unu (0)} < \frac14 \nmL{2}{\ub}$, then by the energy inequality we can estimate
    \begin{align*}
        \nmLW2{\unu (T) - \ub} ^2 &\le \pth{1 + \frac1C} \pth{
            C \nmLW2{\unu (T)} ^2
            + \nmLW2{\ub} ^2
        } \\
        &\le (1 + C) \nmLW2{\unu (0)} ^2 
        - 2 (1 + C) \nu \nmLWT2{\grad \unu} ^2
        \\
        &\qquad 
        + \pth{1 + \frac1C} \nmLW2{\ub} ^2 \\
        &\le \pth{1 + \frac1C} ^2 \pth{
            \nmLW2{\unu (0) - \ub} ^2 + C \nmLW2{\unu (0)} ^2
        } \\
        &\qquad 
        - 2 \nu \nmLWT2{\grad \unu} ^2
        + (1 + C) \nmLW2{\unu (0)} ^2.
    \end{align*}
    Since $\nmL2{\ub} > C \nmL2{\unu (0)}$ implies $\nmL2{\unu (0)} < \frac1{C - 1} \nmL2{\unu (0) - \ub}$, we again have 
    \begin{align*}
        \nmLW2{\unu (T) - \ub} ^2 + 2 \nu \nmLWT2{\grad \unu} ^2 &\le C' \nmLW2{\unu (0) - \ub} ^2
    \end{align*}
    and the result also follows. 
\end{proof}

Because of this lemma, from here we assume 
\[ 
    \frac E{C} \le \nmLW2{\unu (0)} ^2 \le C E
\]
for some universal constant $C$.
Under this assumption, we see there is a trivial upper bound on layer separation as
\begin{align}
    \label{eqn:trivial-bound}
    \half \nmLW2{\unu (T) - \ub} ^2 + \nu \nmLWT2{\grad \unu} ^2 &\le C E
\end{align}
again using the energy inequality.

Next we study the rescaled boundary vorticity. Since $\unu$ solve \eqref{eqn:nse-nu} in $(0, T) \times \Omega$, its rescale $u (t, x) = \unu (\nu t, \nu x)$ solves \eqref{eqn:nse} in $(0, T / \nu) \times (\Omega / \nu)$. Moreover, 
\[
    \grad u (t, x) = \nu \grad u ^\nu (\nu t, \nu x), \qquad \omega (t, x) = \nu \omega ^\nu (\nu t, \nu x).
\]
Now we apply Theorem \ref{thm:boundary-regularity} on $u$, and we have a rescaled estimate on $\unu$ as 
\begin{align}
    \label{eqn:boundary-vorticity-d=2}
    \nmLpWT{\frac32, \infty}{\nu \tomeganu \indtomeganu} ^\frac32 \le C \nu \nmLWT2{\grad \unu} ^2.
\end{align}

\begin{proof}[Proof of Theorem \ref{thm:main}.]
    We choose $T _\nu = 4 ^{-K} T$ for some integer $K$ such that 
    \begin{align*}
        \frac14 T _* \le T _\nu \le T _*
    \end{align*}
    where $T _*$ is defined in \eqref{eqn:def-T*}. The average of $\omega ^\nu$ in $(T _\nu, T)$ is thus bounded by the average of $\tomeganu$. To estimate the boundary vorticity in \eqref{eqn:combined}, we split it as
    \begin{equation}
        \label{eqn:split}
        \begin{aligned}
            \abs{
                \int _{T _\nu} ^T \intset{x _d = 0} \nu \omega ^\nu \dx' \dt
            } & \le 
                \int_{T _\nu} ^T \intset{x _d = 0} \nu \tomeganu \dx' \dt
             \\
            & \le 
            \int_{T _\nu} ^T \intset{x _d = 0} {
                \nu \tomeganu \indtomeganu
            } \dx' \dt
            \\
            & \qquad + 
                \int_{T _\nu} ^T \intset{x _d = 0} 
                \maxs{
                    \frac\nu t, \frac{\nu ^2}{W ^2}, \frac{\nu ^2}{H ^2}
                }
                \dx' \dt.
        \end{aligned}
    \end{equation}
    For the first term in \eqref{eqn:split}, we apply \eqref{eqn:boundary-vorticity-d=2} and obtain
    \begin{equation}
        \label{eqn:split-first-d=2}
        \begin{aligned}
            & \int_{T _\nu} ^T \intset{x _d = 0} {
                A \nu \tomeganu \indtomeganu
            } \dx' \dt 
            \\
            & \qquad 
            \le \nmLpWT{\frac32, \infty}{\nu \tomeganu \indtomeganu} \nmLpWT{3, 1}{A}
            \\
            & \qquad 
            \le \frac18 \nu \nmLWT2{\grad \unu} ^2 + C A ^3 T \abs{\pOmega}.
        \end{aligned}    
    \end{equation}
    For the second term in \eqref{eqn:split}, it is bounded by
    \begin{equation*}
        \begin{aligned}
            & A \int_{T _\nu} ^T \intset{x _d = 0} 
            \maxs{
                \frac\nu t, \frac{\nu ^2}{W ^2}, \frac{\nu ^2}{H ^2}
            }
            \dx' \dt \\
            & \qquad \le A \int_{T _\nu} ^T \intset{x _d = 0} 
                \frac\nu t
            \dx' \dt + A \int_{T _\nu} ^T \intset{x _d = 0} 
            \maxs{
                \frac{\nu ^2}{W ^2}, \frac{\nu ^2}{H ^2}
            } 
            \dx' \dt \\
            & \qquad \le A \nu \log \pthf{T}{T _\nu} \vert\pOmega\vert
            + A \nu ^2 \mins{W, H} ^{-2} T \vert\pOmega\vert 
            \\
            & \qquad \le A ^2 \abs{\Omega} \Re \inv \log \pthf{4 T}{T _*}
            + A ^3 T \abs{\pOmega} \Re ^{-2} \maxs{H / W, 1} ^{2}.
        \end{aligned}
    \end{equation*}
    Since $\frac1{T _*} = C E ^2 \abs{\pOmega} ^{-2} \nu ^{-3} = C \pthf{E}{A ^2 \abs{\Omega}} ^2 \Re ^3 \frac AH$, we separate the log as
    \begin{equation*}
        \begin{aligned}
            \log \pthf{4T}{T _*} \le 3 \log \Re + 2 \pthf{E}{A ^2 \abs{\Omega}} + \frac{A T} H + C.
        \end{aligned}
    \end{equation*}
    Thus the second term in \eqref{eqn:split} is bounded by
    \begin{equation}
        \label{eqn:1/t-remainder}
        \begin{aligned}
            & A \int_{T _\nu} ^T \intset{x _d = 0} 
            \maxs{
                \frac\nu t, \frac{\nu ^2}{W ^2}, \frac{\nu ^2}{H ^2}
            }
            \dx' \dt \\
            & \qquad \le A ^2 \abs{\Omega} \Re \inv \log \pth{\Re + C} + 2 \Re \inv E \\
            & \qquad \qquad 
            + A ^3 T \abs{\pOmega} \left(
                \Re \inv + \Re ^{-2} \maxs{H / W, 1} ^{2}
            \right).
        \end{aligned} 
    \end{equation}

    Plugging \eqref{eqn:split-first-d=2}-\eqref{eqn:1/t-remainder} into \eqref{eqn:split} and applying to \eqref{eqn:combined} (naturally for $\set{x _d = H}$ the same estimate), we conclude for every $T > T _\nu$ that
    \begin{align*}
        &\nmLW2{\unu - \ub} ^2 (T) + \frac\nu2 \nor{\grad \unu} _{L ^2 ((T _\nu, T) \times \Omega)} ^2 \\
        &\qquad \le 4 \nmLW2{\unu - \ub} ^2 (0) + 2 G \int _{T _\nu} ^T \nmLW2{\unu - \ub} ^2 (t) \dt \\
        &\qquad \qquad + 2 \nu G ^2 T \vert\Omega\vert + A ^2 \abs{\Omega} \Re \inv \log \pth{\Re + C} + 2 \Re \inv E
        \\
        &\qquad \qquad
        + C A ^3 T \abs{\pOmega} \left(
            1 + \Re ^{-2} \maxs{H / W, 1} ^{2} 
        \right).
    \end{align*}
    Combined with \eqref{eqn:0-Tnu} we see indeed that the above inequality is true for any $T > 0$, so applying Gr\"onwall's inequality yields
    \begin{align*}
        & \sup _{0 \le t \le T} \set{
            \nmLW2{\unu - \ub} ^2 (t) + \frac\nu2 \nmLWt2{\grad \unu} ^2
        } \\
        & \le \exp(2 G T) \Biggl\{
            4 \nmLW2{\unu (0) - \ub} ^2 +
            C A ^3 T \abs{\pOmega} \left(
                1 + \Re ^{-2} \maxs{H / W, 1} ^{2}
            \right) + R _\nu
        \Biggr\},
    \end{align*}
    where the remainder terms $R _\nu$ is defined as
    \begin{align*}
        R _\nu &= 
        2 \nu G ^2 T \abs{\Omega} + A ^2 \abs{\Omega} \Re \inv \log \pth{\Re + C} + 2 \Re \inv E.
    \end{align*}
    Finally, if $\Re$ is sufficiently small, then the estimate holds true automatically by $\Re \inv E$ term according the trivial bound \eqref{eqn:trivial-bound}. Otherwise, by $\Re ^{-2} \le C$ and $\Re \inv \log (\Re + C) \le C \log (2 + \Re)$ we complete the proof.
\end{proof}

\begin{proof}[Proof of Theorem \ref{thm:constant-shear}.]
    In this particular setting, $G = 0$, $E = A ^2 \vert\Omega\vert$, $W / H = 1$. Therefore we can bound
    \begin{align*}
        R _\nu &\le C A ^2 \abs\Omega \Re ^{-1} \log (2 + \Re) + 2 \Re \inv E \le C A ^2 \abs\Omega \Re \inv \log (2 + \Re)
    \end{align*}
    which finishes the proof of the theorem.
\end{proof}





\appendix

\section{Construction of Weak Solutions to the Euler Equation with Layer Separation}
\label{appendix}

This appendix is dedicated to the proof of Proposition \ref{prop-convex}.
In \cite{Szekelyhidi2011}, Sz\'ekelyhidi constructed weak solutions to \eqref{eqn:euler} with strictly decreasing energy profile with vortex sheet initial data in a unit torus $\Omega = \T ^d$, by means of convex integration introduced in \cite{deLellis2010}.

We will first construct a weak (distributional) solution $(v, P)$ to \eqref{eqn:euler} in a two-dimensional set $\T \times (0, 1)$, such that $v = e _1$ at $t = 0$ and $\half \nmL2v ^2 (t) = \half - r t$ at a constant rate $r > 0$ for small $t$. 
To achieve this, we follow the ideas of \cite{Szekelyhidi2011}. However, we first construct a subsolution $\vb$ on a bigger domain $\tilde\Omega=\T \times[-1, 2]$, that we will convex integrate only on $\T\times(0,1)$. The result function $v$ is  a solution to \eqref{eqn:euler} only inside $\T \times (0, 1)$, but it keeps the global incompressibility $\div v = 0$ in $\T \times [-1, 2]$, together with $v = 0$ on $\T \times (-1, 0) \cup (1, 2)$. This provides the impermeability condition needed at the boundary. More precisely, consider $(\vb, \ub, \qb): (0, T) \times \tOmega \to \RR2 \times \mathcal S _0 ^{2 \times 2} \times \R$ with respect to some $\eb: (0, T) \times \tOmega \to [0, \infty)$, satisfying $\vb \in L _{\loc} ^2$, $\ub \in L _{\loc} ^1$, $\qb \in \mathcal D'$, and in the distribution sense 
\begin{align}
    \label{eqn:subsolution}
    \begin{cases}
        \pt \vb + \div \ub + \grad \qb = 0 \\
        \div \vb = 0
    \end{cases}
\end{align}
and almost everywhere 
\begin{align*}
    \vb \tensor \vb - \ub \le \eb \; \Id.
\end{align*}
Here $\mathcal S _0 ^{2 \times 2}$ is the space of trace-free two-by-two matrices.

To achieve this, we set 
\begin{align*}
    \vb &= (\alpha, 0), &
    \ub &= \begin{pmatrix}
        \beta & \gamma \\
        \gamma & -\beta 
    \end{pmatrix}, & 
    \qb &= \beta
\end{align*}
for some $\alpha (t, x _2), \beta (t, x _2), \gamma (t, x _2)$ to be fixed. With this choice, we need 
\begin{align*}
    \pt \alpha + \partial _{x _2} \gamma &= 0, &
    \begin{pmatrix}
        \eb - \alpha ^2 + \beta & \gamma \\
        \gamma & \eb -\beta 
    \end{pmatrix} \ge 0.
\end{align*}
The second constraint can be simplified to 
\begin{align*}
    2 \eb - \alpha ^2 &\ge 0, &
    (\eb - \alpha ^2 + \beta)(\eb - \beta) \ge \gamma ^2.
\end{align*}
Denote $\fb = \eb - \frac12 \alpha ^2$, $\delta = \beta - \frac12 \alpha ^2$, then 
\begin{align*}
    \begin{cases}
        \fb \ge 0 \\
        (\fb + \delta)(\fb - \delta) \ge \gamma ^2
    \end{cases} \Rightarrow \fb \ge \sqrt{\gamma ^2 + \delta ^2} \Rightarrow \eb \ge \half \alpha ^2 + \sqrt{\gamma ^2 + \delta ^2} \ge \half \alpha ^2 + \vert\gamma\vert,
\end{align*}
which will be the only constraint by setting $\beta = \half \alpha ^2$ thus $\delta = 0$.
It suffices to find $(\alpha, \gamma)$ that solves $\pt \alpha + \partial _{x _2} \gamma = 0$, i.e. we require the conservation of momentum and need
\begin{align*}
    \ddt \int \alpha \dx _2 &= 0, & 
    \gamma &= \int _{0.5} ^{x _2} \pt \alpha \dx _2, &
    \bar e &\ge \half \alpha ^2 + \vert\gamma\vert.
\end{align*}
Let us mimic the strategy in \cite{Szekelyhidi2011} and work with a different vortex-sheet initial data:
\begin{align*}
    \alpha (0, x _2) = \begin{cases}
        1 & 0 \le x _2 \le 1 \\
        0 & \text{otherwise}
    \end{cases}
\end{align*}
and let $\alpha (t, x _2)$ be the piecewise linear function interpolating $(-1, 0)$, $(0, 0)$, $(\lambda t, 1)$, $(1 - \lambda t, 1)$, $(1, 0)$, $(2, 0)$ for some fixed $\lambda > 0$ to be determined as in Figure \ref{fig:alpha}.
\begin{figure}[htbp]
    \label{fig:alpha}
    \centering
    \includegraphics{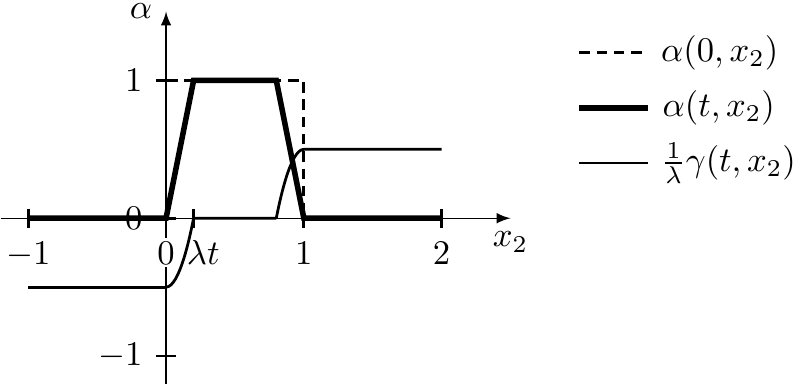}
    \caption{The graph of $\alpha (t, x _2), \frac1\lambda \gamma (t, x _2)$ for a fixed $0 \le t < T = \frac1{2\lambda}$}
\end{figure}

Under this setup, it is simple to see that
\begin{align*}
    \partial _{x _2} \gamma = -\pt \alpha = \lambda \alpha \vert\partial _{x _2} \alpha\vert,
\end{align*}
from which we can recover
\begin{align*}
    \gamma (t, x _2) = 
    \begin{cases}
        -\frac\lambda2 \pth{1 - \alpha ^2 (t, x _2)} & -1 \le x _2 \le \half \\
        \frac\lambda2 \pth{1 - \alpha ^2 (t, x _2)} & \text{otherwise}
    \end{cases}
\end{align*}
and as a consequence, we need 
\begin{align*}
    \eb \ge \half \alpha ^2 + \vert\gamma\vert = \half \alpha ^2 + \frac\lambda2 (1 - \alpha ^2) = \half - \half (1 - \lambda) (1 - \alpha ^2).
\end{align*}
Let us fix $\lambda, \e \in (0, 1)$, and set 
\begin{align}
    \label{eqn:energy-density}
    \bar e = \half - \frac\e2 (1 - \lambda) (1 - \alpha ^2).
\end{align}
Then $\eb > \half \alpha ^2 + \vert\gamma\vert$ in the space-time region $\mathcal U := (0, T) \times \T \times(0, 1) \cap \set{\alpha < 1}$.

We are now ready to apply Theorem 1.3 of \cite{Szekelyhidi2011} when convex integrating in $(0,T) \times \T \times (0, 1)$ only.  This provides  infinitely many $(\tilde v, \tilde u) \in L _{loc} ^\infty ((0, T) \times \tOmega)$ with $\tilde v \in C (0,T; L _{\text{weak}} ^2 (\tOmega)$ such that $(\tilde v, \tilde u, 0)$ satisfies \eqref{eqn:subsolution}, $(\tilde v, \tilde u) = 0$ a.e. in $\mathcal U ^c = (0, T) \times \T \times ((-1, 0) \cup (1, 2)) \cup \{\alpha = 1\}$, and $v := \vb + \tilde v$, $u := \ub + \tilde u$ satisfy
\[
    v \tensor v - u = \eb \; \Id \qquad \ae \inn \mathcal (0, T) \times \T \times (0, 1).
\]
From the second equation of \eqref{eqn:subsolution},  $\partial_{x_2} v_2=-\partial_{x_1}v_1$, and $v_2\in C_{x_2}(W^{-1,\infty}_{x_1})$. But since we didn't convex integrate on $(0,T)\times\T\times ((-1,0)\cup(0,1))$, we still have $v_2=0$ at $x_2=0$ and $x_2=1$. This provides the impermeability boundary conditions at these points.

Then $(v, P)$ satisfies \eqref{eqn:euler} with the impermeability conditions in $(0,T)\times\T\times(0,1)$ in the distributional sense for $P = \qb - \eb$, and $\half \vert v\vert ^2 = \eb$ matches the energy density profile given in \eqref{eqn:energy-density} (note that the constructed solution is not solution to \eqref{eqn:euler} in the domain $(0,T)\times \T\times (-1,2)$). Now, we have on  $(0,T)\times\T\times(0,1)$:
\[
    \ddt \int \hfsq v \dx = \e (1 - \lambda) \int \alpha \pt \alpha \dx _2 = -\e \lambda (1 - \lambda) \int \alpha ^2 \vert\partial _{x _2} \alpha\vert \dx _2 = -\frac23 \e \lambda (1 - \lambda),
\]
i.e. $\half \nmL2v ^2$ decreases linearly at rate $r := \frac23 \e \lambda (1 - \lambda)$. 

We consider the deviation from initial value. Since $\tilde v = 0$ a.e. at $t = 0$, we know $v (0) = \vb (0) = \pm e _1$, and
\begin{align*}
    \half \ddt \int \vert v (t) - v (0)\vert ^2 \dx &= \ddt \int \hfsq{v (t)} \dx - \ddt \int v (t) \cdot v (0) \dx \\
    &= - r - \int \pt v (t) \cdot v (0) \dx \\
    &= - r + \int \div u (t) \cdot v (0) \dx.
\end{align*}
The quantity $\eb$ and $\qb$ depend only on $t, x_2$, so the equation on $v_1$ from \eqref{eqn:subsolution} has no pressure and verify:
$$
\partial _{x_2} u_{12} = -\pt v _1 - \partial_{x_1} u_{11}. 
$$
Especially, $u_{12}\in C_{x_2}(W^{-1,\infty}_{t,x_1})$.  
Therefore,
\begin{align*}
    \int \div u (t) \cdot v (0) \dx &= \int _{\T} - u _{12} (t, x _1, 0) + u _{12} (t, x _1, 1) \dx _1 \\
    &= \int _{\T} - \ub _{12} (t, x _1, 0) + \ub _{12} (t, x _1, 1) \dx _1 \\
    &= - \gamma (t, 0) + \gamma (t, 1) = \lambda.
\end{align*}
This gives 
\begin{align*}
    \half \ddt \int \vert v (t) - v (0)\vert ^2 \dx = \lambda - r = \lambda - \frac23 \e \lambda (1 - \lambda).
\end{align*}
This rate converges to 1 by setting $\lambda \to 1$ and $\e \to 0$, thus
\begin{align*}
    \half \nor{v (t) - e _1} _{L ^2 (\T \times [0, 1])} ^2 = C t, \qquad \forall t \in \left(0, \frac1{2 \lambda}\right).
\end{align*}
Moreover, $v = 0$ on $\set{x _2 = 0, 1}$.

Now for some $A > 0$, define $(v ^*, P ^*): (0, \frac1{2 \lambda A}) \times \Omega \to \R ^2 \times \R$ by time rescaling $v ^* (t, x) = A v (A t, x)$, $P ^* (t, x) = A ^2 P (A t, x)$, where $\Omega = \T \times [0, 1]$ is the unit channel. Then $v ^* (0) = A e _1$ in $\Omega$, $v ^* (t) = 0$ on $\partial \Omega$ and 
\begin{align*}
    \half \nor{v (t) - A e _1} _{L ^2 (\T \times [0, 1])} ^2 = C A ^3 t, \qquad \forall t \in \left(0, \frac1{2 \lambda A}\right)
\end{align*}
for some $C, \lambda$ satisfying $0 < C < \lambda < 1$.

\bibliographystyle{alpha}
\bibliography{ref.bib}

\begin{thebibliography}{LFMNLT08}

\bibitem[BDLSV19]{Buckmaster2019}
Tristan Buckmaster, Camillo De~Lellis, L\'{a}szl\'{o} Sz\'{e}kelyhidi, and Vlad
  Vicol.
\newblock Onsager's conjecture for admissible weak solutions.
\newblock {\em Comm. Pure Appl. Math.}, 72(2):229--274, 2019.

\bibitem[BT13]{Bardos2013}
Claude~W. Bardos and Edriss~S. Titi.
\newblock Mathematics and turbulence: where do we stand?
\newblock {\em J. Turbul.}, 14(3):42--76, 2013.

\bibitem[BTW12]{Bardos2012}
Claude~W. Bardos, Edriss~S. Titi, and Emil Wiedemann.
\newblock The vanishing viscosity as a selection principle for the {E}uler
  equations: the case of 3{D} shear flow.
\newblock {\em C. R. Math. Acad. Sci. Paris}, 350(15-16):757--760, 2012.

\bibitem[BV19]{BuckmasterVicol2019}
Tristan Buckmaster and Vlad Vicol.
\newblock Nonuniqueness of weak solutions to the {N}avier-{S}tokes equation.
\newblock {\em Ann. of Math. (2)}, 189(1):101--144, 2019.

\bibitem[CEIV17]{Constantin2017}
Peter Constantin, Tarek Elgindi, Mihaela Ignatova, and Vlad Vicol.
\newblock Remarks on the inviscid limit for the {N}avier-{S}tokes equations for
  uniformly bounded velocity fields.
\newblock {\em SIAM J. Math. Anal.}, 49(3):1932--1946, 2017.

\bibitem[CET94]{Constantin1994}
Peter Constantin, Weinan E, and Edriss~S. Titi.
\newblock Onsager's conjecture on the energy conservation for solutions of
  {E}uler's equation.
\newblock {\em Comm. Math. Phys.}, 165(1):207--209, 1994.

\bibitem[CKV15]{Constantin2015}
Peter Constantin, Igor Kukavica, and Vlad Vicol.
\newblock On the inviscid limit of the {N}avier-{S}tokes equations.
\newblock {\em Proc. Amer. Math. Soc.}, 143(7):3075--3090, 2015.

\bibitem[CV14]{Choi2014}
Kyudong Choi and Alexis~F. Vasseur.
\newblock Estimates on fractional higher derivatives of weak solutions for the
  {N}avier-{S}tokes equations.
\newblock {\em Ann. Inst. H. Poincar\'{e} Anal. Non Lin\'{e}aire},
  31(5):899--945, 2014.

\bibitem[CV18]{Constantin2018}
Peter Constantin and Vlad Vicol.
\newblock Remarks on high {R}eynolds numbers hydrodynamics and the inviscid
  limit.
\newblock {\em J. Nonlinear Sci.}, 28(2):711--724, 2018.

\bibitem[DLS09]{DeLellis2009}
Camillo De~Lellis and L\'{a}szl\'{o} Sz\'{e}kelyhidi.
\newblock The {E}uler equations as a differential inclusion.
\newblock {\em Ann. of Math. (2)}, 170(3):1417--1436, 2009.

\bibitem[DLS10]{deLellis2010}
Camillo De~Lellis and L{\'a}szl{\'o} Sz{\'e}kelyhidi.
\newblock On admissibility criteria for weak solutions of the euler equations.
\newblock {\em Archive for Rational Mechanics and Analysis}, 195(1):225--260,
  2010.

\bibitem[E00]{E2000}
Weinan E.
\newblock Boundary layer theory and the zero-viscosity limit of the
  {N}avier-{S}tokes equation.
\newblock {\em Acta Math. Sin. (Engl. Ser.)}, 16(2):207--218, 2000.

\bibitem[FTZ18]{Fei2018}
Mingwen Fei, Tao Tao, and Zhifei Zhang.
\newblock On the zero-viscosity limit of the {N}avier-{S}tokes equations in
  {$\mathbb{R}_+^3$} without analyticity.
\newblock {\em J. Math. Pures Appl. (9)}, 112:170--229, 2018.

\bibitem[Gre00]{Grenier2000}
Emmanuel Grenier.
\newblock On the nonlinear instability of {E}uler and {P}randtl equations.
\newblock {\em Comm. Pure Appl. Math.}, 53(9):1067--1091, 2000.

\bibitem[GVD10]{Varet2010}
David G\'{e}rard-Varet and Emmanuel Dormy.
\newblock On the ill-posedness of the {P}randtl equation.
\newblock {\em J. Amer. Math. Soc.}, 23(2):591--609, 2010.

\bibitem[GVN12]{Varet2012}
David G\'{e}rard-Varet and Toan~Trong Nguyen.
\newblock Remarks on the ill-posedness of the {P}randtl equation.
\newblock {\em Asymptot. Anal.}, 77(1-2):71--88, 2012.

\bibitem[Ise18]{Isett2018}
Philip Isett.
\newblock A proof of {O}nsager's conjecture.
\newblock {\em Ann. of Math. (2)}, 188(3):871--963, 2018.

\bibitem[Kat84]{Kato1984}
Tosio Kato.
\newblock Remarks on zero viscosity limit for nonstationary navier- stokes
  flows with boundary.
\newblock In S.~S. Chern, editor, {\em Seminar on Nonlinear Partial
  Differential Equations}, pages 85--98, New York, NY, 1984. Springer New York.

\bibitem[Kel07]{Kelliher2007}
James~P. Kelliher.
\newblock On {K}ato's conditions for vanishing viscosity.
\newblock {\em Indiana Univ. Math. J.}, 56(4):1711--1721, 2007.

\bibitem[Kel08]{Kelliher2008}
James~P. Kelliher.
\newblock Vanishing viscosity and the accumulation of vorticity on the
  boundary.
\newblock {\em Commun. Math. Sci.}, 6(4):869--880, 2008.

\bibitem[Kel17]{Kelliher2017}
James~P. Kelliher.
\newblock Observations on the vanishing viscosity limit.
\newblock {\em Trans. Amer. Math. Soc.}, 369(3):2003--2027, 2017.

\bibitem[KV21a]{Kang2021}
Moon-Jin Kang and Alexis~F. Vasseur.
\newblock Contraction property for large perturbations of shocks of the
  barotropic {N}avier-{S}tokes system.
\newblock {\em J. Eur. Math. Soc. (JEMS)}, 23(2):585--638, 2021.

\bibitem[KV21b]{Kang2021Inven}
Moon-Jin Kang and Alexis~F. Vasseur.
\newblock Uniqueness and stability of entropy shocks to the isentropic {E}uler
  system in a class of inviscid limits from a large family of {N}avier-{S}tokes
  systems.
\newblock {\em Invent. Math.}, 224(1):55--146, 2021.

\bibitem[LFMNLT08]{Filho2008}
Milton~C. Lopes~Filho, Anna~L. Mazzucato, Helena~J. Nussenzveig~Lopes, and
  Michael Taylor.
\newblock Vanishing viscosity limits and boundary layers for circularly
  symmetric 2{D} flows.
\newblock {\em Bull. Braz. Math. Soc. (N.S.)}, 39(4):471--513, 2008.

\bibitem[Mae14]{Maekawa2014}
Yasunori Maekawa.
\newblock On the inviscid limit problem of the vorticity equations for viscous
  incompressible flows in the half-plane.
\newblock {\em Comm. Pure Appl. Math.}, 67(7):1045--1128, 2014.

\bibitem[MM18]{Maekawa2018}
Yasunori Maekawa and Anna~L. Mazzucato.
\newblock The inviscid limit and boundary layers for {N}avier-{S}tokes flows.
\newblock In {\em Handbook of mathematical analysis in mechanics of viscous
  fluids}, pages 781--828. Springer, Cham, Cham, Switzerland, 2018.

\bibitem[MT08]{Mazzucato2008}
Anna Mazzucato and Michael Taylor.
\newblock Vanishing viscosity plane parallel channel flow and related singular
  perturbation problems.
\newblock {\em Anal. PDE}, 1(1):35--93, 2008.

\bibitem[Ons49]{Onsager1949}
Lars Onsager.
\newblock Statistical hydrodynamics.
\newblock {\em Nuovo Cimento (9)}, 6(Supplemento, 2 (Convegno Internazionale di
  Meccanica Statistica)):279--287, 1949.

\bibitem[Pra04]{Prandtl1904}
Ludwig Prandtl.
\newblock {\"U}ber flussigkeitsbewegung bei sehr kleiner reibung.
\newblock {\em Actes du 3me Congr{\`e}s International des Math{\'e}maticiens,
  Heidelberg, Teubner, Leipzig}, pages 484--491, 1904.

\bibitem[SC98]{Caflisch1998}
Marco Sammartino and Russel~E. Caflisch.
\newblock Zero viscosity limit for analytic solutions, of the {N}avier-{S}tokes
  equation on a half-space. {I}. {E}xistence for {E}uler and {P}randtl
  equations.
\newblock {\em Comm. Math. Phys.}, 192(2):433--461, 1998.

\bibitem[Ser14]{Seregin2014}
Gregory Seregin.
\newblock {\em Lecture Notes on Regularity Theory for the Navier-Stokes
  Equations}.
\newblock World Scientific, Singapore, 2014.

\bibitem[SM93]{Stein1993}
Elias~M. Stein and Timothy~S. Murphy.
\newblock {\em Harmonic Analysis (PMS-43): Real-Variable Methods,
  Orthogonality, and Oscillatory Integrals. (PMS-43)}.
\newblock Princeton University Press, Princeton, New Jersey, 1993.

\bibitem[Sz{\'e}11]{Szekelyhidi2011}
László Sz{\'e}kelyhidi.
\newblock Weak solutions to the incompressible euler equations with vortex
  sheet initial data.
\newblock {\em Comptes Rendus Mathematique}, 349(19):1063--1066, 2011.

\bibitem[TW97]{Temam1997}
Roger Temam and Xiaoming Wang.
\newblock On the behavior of the solutions of the {N}avier-{S}tokes equations
  at vanishing viscosity.
\newblock {\em Ann. Scuola Norm. Sup. Pisa Cl. Sci. (4)}, 25(3-4):807--828
  (1998), 1997.
\newblock Dedicated to Ennio De Giorgi.

\bibitem[Vas10]{Vasseur2010}
Alexis~F. Vasseur.
\newblock Higher derivatives estimate for the 3{D} {N}avier-{S}tokes equation.
\newblock {\em Ann. Inst. H. Poincar\'{e} Anal. Non Lin\'{e}aire},
  27(5):1189--1204, 2010.

\bibitem[VY21]{Vasseur2021}
Alexis~F. Vasseur and Jincheng Yang.
\newblock Second derivatives estimate of suitable solutions to the 3d
  navier--stokes equations.
\newblock {\em Archive for Rational Mechanics and Analysis}, 241(2):683--727,
  2021.

\bibitem[Wan01]{Wang2001}
Xiaoming Wang.
\newblock A {K}ato type theorem on zero viscosity limit of {N}avier-{S}tokes
  flows.
\newblock {\em Indiana Univ. Math. J.}, 50(Special Issue):223--241, 2001.
\newblock Dedicated to Professors Ciprian Foias and Roger Temam (Bloomington,
  IN, 2000).

\end{thebibliography}

\end{document}